\documentclass[11pt]{article}
\usepackage{hyperref}
\usepackage{amssymb,amsmath, amsthm}
\usepackage{graphicx}
\usepackage{graphics}
\usepackage{psfrag}

\usepackage[all]{xy}
\usepackage[applemac]{inputenc}

\newtheorem{lemma}{Lemma}[section]

\newtheorem{proposition}{Proposition}[section] 
 
\newtheorem{definition}{Definition}[section]

\theoremstyle{definition}
\newtheorem{remark}{Remark}[section]
\newtheorem{example}{Example}[section]

%
% Newcommands

    % real line

\newcommand{\Rb}{\mathbb{R}}

%

%%% Todo 

 \author{Mário M. Gra\c{c}a  \thanks{Departamento de Matem\'{a}tica,  IDMEC,
LAETA,  Instituto Superior T\'ecnico, Universidade  de Lisboa,   Lisboa, Portugal.}}

  \begin{document}
 
\title{Recursive families of higher order iterative maps}
\maketitle

  %=======================================================================
  
  \begin{abstract}
  
  \noindent
To approximate a simple root of an equation we construct families of  iterative maps of higher order of convergence. These maps are based on model functions which can be written as an inner product. The main family of maps discussed  is defined recursively and is called {\it Newton-barycentric}.  We illustrate the application of Newton-barycentric maps in two worked examples, one dealing with a typical least squares problem and the other showing how to   locate simultaneously a great number of extrema of the Ackley's function.

  \end{abstract}

\medskip
\noindent
{\it Key-words}: Order of convergence, Newton's method,  Newton\--Taylor map, Newton\--bary\-centric map, least squares,  Ackley's function.

\medskip
\noindent
{\it MSC2010}: 49 M15, 65H05, 65H10.

%========================================================================
\section{Introduction}\label{introd}
The classical Newton's iterative scheme for approximating the roots of an equation has been generalized by many authors in order to define iterative maps going from cubical to arbitrary orders of convergence (see for instance \cite{traub}, \cite{rheinboldt}, \cite{collatz}, \cite{householder}, \cite{labelle}, \cite{sebah},  \cite{torres}, \cite{weerakoon} and references therein).
The primary aim of this work is to present a systematic construction of families of iterative maps of higher order of convergence.

\medskip
\noindent
All the  iterative maps $t$  to be considered have a common structure  $t(x)=x- \left[\phi(x)  \right]^{-1} f(x)$, where $f$ is a real function with a simple zero at $z$.  The function  $\phi$  will be called a {\it model function}. This model function depends on  $f$ and on another function  $h$ which we name {\it step function}.

\medskip
\noindent
The families of iterative maps to be discussed are constructed by choosing distinct model functions $\phi$. We remark that our approach does not follow the traditional  path for generating iterative maps by direct or inverse hyperosculatory interpolation (see, for instance \cite{traub}) or  Taylor expansions around a zero of $f$ (see for instance \cite{sebah}).
 
 \medskip
\noindent
The paper is organized as follows. In Section~\ref{modelo} we introduce the notion of a model function $\phi$ and prove that the iterative map $t(x)=x- \left[\phi(x)  \right]^{-1} f(x)$ has a certain order of convergence (see Proposition~\ref{prop1}). The proof of Proposition~\ref{prop1}  leads to the definition of  a step function $h$. Then, taking distinct model functions we construct  the so\--called Taylor\--type and  barycentric\--type maps.  In both cases, the map $\phi$ can be written as the Euclidean inner product of two vectorial functions, one depending on  $f$ and the other on the step function $h$.  Such inner product enables us to prove that the iterative map $\phi$ is indeed a  model function and the correspondent iterative map $t$ has a  certain order of convergence (see propositions~\ref{propB} and \ref{lemaC}).

\medskip
\noindent
In the following section we construct recursively families of iterative maps $t_j$ with $j=0,1,\ldots, k$ (for a given positive integer $k$), called Newton\--Taylor and Newton\--barycentric.  The respective step function $h_j$ (defined recursively)  uses the classical Newton's method as a starter. The model function in the Newton\--Taylor family  is of Taylor\--type whereas for Newton\--barycentric family is of barycentric type.  The main result is Proposition~\ref{propD} which shows that each member $t_j$ of the referred families has order of convergence at least $j+2$. We end Section \ref{secrec} by briefly referring how to extend our iterative maps  in order to deal with functions $f$ defined in $\Rb^n$.

\medskip
\noindent
The last section is devoted to the application of some Newton-barycentric formulas to concrete examples. The first example is a typical least squares problem. The other example shows the ability of these maps  to   locate simultaneously  extrema of the Ackley's function \cite{ackley}, a  function widely used   for testing optimization algorithms (see for instance \cite{more}).  This last numerical example shows that higher order iterative maps might be relevant  in those real-world applications where it is important  to get a simultaneous localization of  a great number of extrema in $\Rb^2$, starting from a suitable set of data  points in the domain of the objective function.
%========================================================================
\section{Iterative maps derived from model functions}\label{modelo}

Given  a real\--valued  function   $f$ defined on an open set $D\subset \Rb$,  we assume that $f$ is sufficiently smooth  in a neighborhood of a {\it simple zero} $z$ of $f$. In what follows we construct certain families of  iterative maps $t$ generating a sequence $x_{k+1}=t(x_k)$, $k=0,1,\ldots$, converging locally to $z$.

\medskip
\noindent
Let us first recall the notion of {\it order of convergence} of an iterative map $t$ (see for instance \cite{traub}). We say that  $t$ has order order of convergence $p\geq 1$ if there exists a positive constant $c$, such that
$$
\lim_{x \rightarrow z} \displaystyle{  \frac{|z-t(x)|}{| z-x|^p} }=c.
$$
\noindent
 It is well-known that an advantage of the superlinear convergence (that is, when  $p>1$)  is the assurance of the existence of a neighborhood of $z$ where the sequence $x_{k+1}=t(x_k)$ converges to $z$ (see for instance \cite{dennis1}). However, in general, maps of higher order of convergence have expressions of increasing complexity, and so increasing its computational cost. This is one reason why among the  families of iterative maps  obtained in Section \ref{secrec}, the  Newton-barycentric formulas are the ones used in the worked examples.  These formulas are computationally more economic than the  Newton\--Taylor ones, which are only considered here as an illustration of our constructive process of generating iterative methods.

\begin{proposition}\label{prop1}
Let $z$ be a simple zero of a function  $f: D\subset \Rb\mapsto \Rb$ and $\phi$ a sufficiently smooth function in  a neighborhood of 
$z$, such that its derivatives $\phi^{(i)}$
satisfy the  $j+1$ equalities
\begin{equation}\label{eq2}
\begin{array}{l}
 \phi^{(i)}(z)=\displaystyle{\frac{ f^{(i+1)}(z)}{i+1}},\qquad i=0,1,\ldots, j,
\end{array}
\end{equation}
for $j\geq 0$ a fixed integer.
Then, for any initial value $x_0$ sufficiently close to $z$, the iterative process $x_{k+1}=t(x_k)$, $k=0,1,\ldots$,  with
\begin{equation}\label{eq1}
t(x)=x-\phi^{-1} (x)\, f(x),
\end{equation}
 converges to  $z$  and its order of convergence  is at least $j+2$.
\end{proposition}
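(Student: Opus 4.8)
The plan is to analyze the error $e(x) := z - t(x)$ by Taylor-expanding everything in sight about the simple zero $z$ and showing that the first $j+1$ terms cancel by virtue of the hypotheses \eqref{eq2}. Write $u := x - z$ and set $g(x) := \phi(x)^{-1} f(x)$, so that $z - t(x) = z - x + g(x) = -u + g(x)$; it suffices to prove $g(x) = u + O(u^{j+2})$, equivalently that the Taylor expansion of $g$ at $z$ agrees with the identity function up to and including order $j+1$, with a nonzero coefficient at order $j+2$ giving the constant $c$.

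The key steps I would carry out in order. First, since $z$ is a \emph{simple} zero, $f(z) = 0$ and $f'(z) \neq 0$, and from \eqref{eq2} with $i = 0$ we get $\phi(z) = f'(z) \neq 0$, so $\phi^{-1}$ is smooth near $z$ and the map $t$ is well defined in a neighborhood of $z$. Second, I would write the Taylor expansions
\begin{equation*}
f(x) = \sum_{i\ge 1} \frac{f^{(i)}(z)}{i!}\,u^{i}, \qquad \phi(x) = \sum_{i\ge 0} \frac{\phi^{(i)}(z)}{i!}\,u^{i},
\end{equation*}
and substitute the hypothesis $\phi^{(i)}(z) = f^{(i+1)}(z)/(i+1)$ for $i = 0,\dots,j$ into the second one. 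The crucial observation is that with this substitution the partial sum of $\phi$ through degree $j$ becomes $\sum_{i=0}^{j} \frac{f^{(i+1)}(z)}{(i+1)\,i!}\,u^{i} = \sum_{i=0}^{j}\frac{f^{(i+1)}(z)}{(i+1)!}\,u^{i}$, which is exactly $\frac{1}{u}\big(f(x) - \text{(tail of order } \ge j+2)\big)$ divided appropriately — more precisely, $u\,\phi(x) = \sum_{i=0}^{j}\frac{f^{(i+1)}(z)}{(i+1)!}\,u^{i+1} + O(u^{j+2}) = f(x) + O(u^{j+2})$, since $f(x) = \sum_{m\ge 1}\frac{f^{(m)}(z)}{m!}u^m$ and reindexing $m = i+1$ matches the first $j+1$ coefficients.

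Third, from $u\,\phi(x) = f(x) + O(u^{j+2})$ and $\phi(x) = f'(z) + O(u) \ne 0$, I divide to obtain $g(x) = \phi(x)^{-1}f(x) = u + \phi(x)^{-1}O(u^{j+2}) = u + O(u^{j+2})$. Hence $z - t(x) = -u + g(x) = O(u^{j+2})$, which gives order of convergence at least $j+2$; the limit $\lim_{x\to z}|z-t(x)|/|z-x|^{j+2}$ equals $\big|f'(z)^{-1}\big(\frac{f^{(j+2)}(z)}{(j+2)!} - \frac{\phi^{(j+1)}(z)}{(j+1)!}\big)\big|$, a finite constant $c$, confirming the definition of order. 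Finally, local convergence for $x_0$ sufficiently close to $z$ follows from the superlinear estimate just established, exactly as recalled in the text before the proposition (cf.\ \cite{dennis1}): pick a neighborhood on which $|z - t(x)| \le \tfrac12|z-x|$.

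The main obstacle is purely bookkeeping: being careful that the substitution of \eqref{eq2} really lines up the coefficients of $u\,\phi(x)$ with those of $f(x)$ for indices $0$ through $j$, and that the leftover terms are genuinely $O(u^{j+2})$ rather than $O(u^{j+1})$. There is no analytic difficulty once $\phi(z) \ne 0$ is secured; the only thing to watch is the index shift $i \mapsto i+1$ and the factor $1/(i+1)$ turning $i!$ into $(i+1)!$, which is precisely what makes the hypothesis \eqref{eq2} the "right" condition.
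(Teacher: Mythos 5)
Your proof is correct. You establish $\phi(z)=f'(z)\neq 0$, expand $u\,\phi(x)$ and $f(x)$ in Taylor series about $z$, use \eqref{eq2} to match the coefficients through order $j+1$, and divide by $\phi$ at the end to obtain $g(x)=u+O(u^{j+2})$ and hence $z-t(x)=O(u^{j+2})$; the index shift $i\mapsto i+1$ together with $1/(i+1)$ turning $i!$ into $(i+1)!$ is exactly right. (One small caution on wording: the leading coefficient $f'(z)^{-1}\bigl(\tfrac{f^{(j+2)}(z)}{(j+2)!}-\tfrac{\phi^{(j+1)}(z)}{(j+1)!}\bigr)$ need not be nonzero, in which case the order is simply higher than $j+2$; since the claim is ``at least $j+2$,'' this does no harm.)

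The paper takes a different but equivalent route. It starts from the same identity, written as $\phi(x)\,\Delta t(x)=-f(x)$ with $\Delta t=t-x$ (note $\Delta t=-g$), and proceeds by induction on $j$, applying Leibniz's rule to differentiate the product $m+1$ times and then solving for $\Delta^{(m+1)}t(z)$ at each step. That inductive argument effectively performs the division by $\phi(z)=f'(z)$ at every stage inside the Leibniz expansion, whereas you defer the division by $\phi$ to a single step at the end of the Taylor-coefficient comparison. Your organization avoids the explicit induction, sidesteps the Leibniz binomial bookkeeping, and gives the leading error constant for free; the paper's form has the advantage of producing, along the way, the explicit derivative conditions $\Delta^{(1)}t(z)=-1$, $\Delta^{(i)}t(z)=0$ for $2\le i\le j+1$, which the authors then package as the notion of a \emph{step function} and reuse throughout Section~\ref{secrec}.
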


\begin{proof} 
From \eqref{eq1} it is obvious that the zero $z$ of $f$ is a fixed point of the map $t$ (that is, $t(z)=z$). Let us consider the function $\,\Delta t$ defined  by
% \begin{equation}\label{eq3}
$$ 
\Delta t(x)= t(x)-x.
$$
%\end{equation}
Note that  its derivatives are
$$\Delta^{(1)} t(x)= t^{(1)}(x)-1,\quad \mbox{and}\quad \Delta^{(i)} t(x)= t^{(i)}(x), \quad \mbox{for}\quad  i\geq 2.$$
We now use induction on $j$ to prove that the hypotheses in \eqref{eq2} imply that  $\Delta t(z)=0$, $\Delta^{(1)} t(z)= -1$, $\Delta^{(j)} t(z)= 0$, for $j\geq 2$, and consequently  $t$ has the referred order of convergence.

\medskip
\noindent
Let $j=0$. Rewriting   \eqref{eq1} as
\begin{equation}\label{eq4}
\phi(x)\, \Delta t(x)= - f(x),
\end{equation}
and applying the derivative operator to this equation, we have
\begin{equation}\label{eq5}
\phi^{(1)} (x)\, \Delta t (x)+ \phi(x)\, \Delta ^{(1)}t(x)= - f^{(1)} (x).
\end{equation}
Since  $\Delta t (z)=0$, and $\phi$ satisfies  \eqref{eq2} with $i=0$,   it follows that
$
f^{(1)} (z)\, \Delta^{(1)} (z)=-f^{(1)} (z).
$
As   $z$ is a simple zero for $f$, then 
$$
\Delta^{(1)} t (z)=-1\,\Longleftrightarrow\, t^{(1)}(z)=0,
$$
which means that the iterative process generated by   $t$ has local order of convergence $p$ at least $2$. That is, $p\geq j+2$.

\medskip
\noindent
Let $j=1$. Differentiating  \eqref{eq5}, we get
$$
\phi^{(2)} (x)\, \Delta t (x)+2\,  \phi^{(1)}(x)\, \Delta ^{(1)}t(x)+\phi(x)\, \Delta^{(2)} t(x)= - f^{(2)} (x).
$$
  Since   $\Delta t(z)=0$ and $\Delta^{(1)} t(z)=-1$,   we obtain 
$$
-2\, \displaystyle{ \frac{f^{(2)}(z)}{2} } + f^{(1)} (z)\, \Delta^{(2)} t (z)= - f^{(2)} (z).
$$
Therefore $\Delta^{(2)} t (z)=t^{(2)}(z)=0$, and so the iterative process has local order of convergence at least $3=j+2$.

\medskip
\noindent
For an integer $m\geq 2$, assume that
$$
\phi^{(j)} (z)=\displaystyle{\frac{f^{(j+1)} (z)}{j+1}   }, \quad \mbox{for}\quad j=0,1,\ldots,m,
$$
and
\begin{equation}\label{num1}
\Delta^{(1)} t(z)=-1, \quad  \Delta^{(j)} t(z)=0,  \quad \mbox{for}\quad j=2,3,\ldots,m.
\end{equation}
Let us show that $ \Delta^{(m+1)} t(z)=t^{(m)}(z)=0$.
From \eqref{eq4}  and the Leibniz's rule for the derivatives  of the product, we have
$$
\begin{array}{ll}
& \phi^{(m+1)} (x)\, \Delta t(x)+\binom{m+1}{1}  \phi^{(m)} (x)\, \Delta^{(1)} t(x)+\cdots+ \binom{m+1}{m}  \phi^{(1)} (x)\, \Delta^{(m)} t(x)+\\
&\hspace{2cm}+  \phi^{(0)} (x)\, \Delta^{(m+1)} t(x)= - f^{(m+1)}(x).
\end{array}
$$
Thus, by the induction hypotheses, we obtain
$$
-\binom{m+1}{1} \,\displaystyle{\frac{ f^{(m+1)} (z)}{m+1}}+ f^{(1)}(z)\, \Delta^{(m+1)} t (z)= - f^{(m+1)} (z) \, \Leftrightarrow \Delta^{(m+1)} t (z)=0.
$$
Hence the iterative map $t_{m+1}$ has local order of convergence $p\geq m+2$ and the proof is complete.
\end{proof}
\begin{remark}\label{remnovo}
The well-known result on the local order of convergence of the  Newton's  map $t(x)=x-\left[ f^{(1)}(x)  \right]^{-1} f(x)$ follows immediately from  Proposition \ref{prop1}. It is enough to see that \eqref{eq2} is verified for $j=0$, i.e. $\phi^{(0)}(z)=f^{(1)}(z)$, and so $t$ has local order of convergence at least 2.
\end{remark}

\noindent
A function  like $\Delta t=t(x)-x$, satisfying the properties \eqref{num1} in the proof of Proposition \ref{prop1}, will be called a {\it step function} and a function $\phi$ satisfying  \eqref{eq2}   will be called  a  {\it model} function.

\begin{definition}\label{defstand}
Let $z$ be a simple zero of a function  $f: D\subset \Rb\mapsto \Rb$,  $h$ and $\phi$ sufficiently smooth functions in a neighborhood of $z$, and $j\geq 0$ a fixed  integer.
\begin{itemize}
\item A function $\phi$ is called a model function if it satisfies the $j+1$ conditions \eqref{eq2}.
\item A function $h$  is called a  step function at $x=z$ (or simply a step function) if it satisfies the following $j+1$ equalities: 
 \begin{equation}\label{hs}
\begin{array}{l}
h(z)=0,\quad 
h^{(1)} (z)=-1\quad \mbox{and}\quad
h^{(i)} (z)=0, \quad \mbox{for}\quad i=2,3,\ldots, j.
\end{array}
\end{equation}
\end{itemize}
\end{definition}
    %===================================
\subsection{The Taylor\--type  maps  }\label{subsec1}

As before we assume throughout that $z$ is a simple zero of a real function $f$. Let $k\geq 0$ be an integer. We now construct a family of iterative maps based on the  following function

\begin{equation}\label{eq10B}
\phi_k(x)= f^{(1)} (x)+ \displaystyle{ \frac{f^{(2)}(x)}{2 !}   }\, h(x) +  \displaystyle{ \frac{f^{(3)}(x)}{3 !}   }\, h^2(x) + \cdots+  \displaystyle{ \frac{f^{(k+1)}(x)}{(k+1) !}   }\, h^k(x),
\end{equation}
where  $h$ and $h^k$ denote respectively a given step function and its $k$\--th power.
%\medskip
%\noindent
%Let us define the map
%\begin{equation}\label{eq11B}
%t_k(x)= x-  \left[ \phi_k(x)\right] ^{-1}  \, f(x),
%\end{equation}
%where $\phi_k$  is the function given by \eqref{eq10B}. We call  $t_k$ a {\it Taylor\--type map}.
%
%\medskip
%\noindent In paragraph \ref{subbar} we will discuss another family of iterative maps which is computationally more interesting than Taylor-type ones in the sense  it is deduced from a set of model functions $\phi_k$ which  only uses the first derivative of the function $f$.  
In order to show that the iterative process generated by a map $t_k$, defined by
\begin{equation}\label{eq11B}
t_k(x)= x-  \left[ \phi_k(x)\right] ^{-1}  \, f(x),
\end{equation} 
 has order of convergence at least $k+2$, we write $\phi_k$ as an inner product of two vectorial  functions,  one depending on $f$ and the other on the step function $h$.  This simplifies considerably the necessary  computations because of the orthogonality of a certain basis of $\Rb^{k+1}$ (see Lemma \ref{lemaB} below).

\medskip 
\noindent
The function $\phi_k$ in \eqref{eq10B} can be written as the following Euclidean  inner product
%\begin{equation}\label{inta}
$$
\phi_k(x)=\langle U_k(x), V_k(x)\rangle,
$$
%\end{equation}
with
\begin{equation}\label{hs1}
V_k(x)=\left(1, h(x), h^2(x), h^3(x), \cdots, h^k(x)\right),
\end{equation}
and 
$$U_k(x) = \left(f^{(1)}(x),  \frac{f^{(2)}(x)}{2 !} , \frac{f^{(3)}(x)}{3 !},\ldots, \frac{f^{(k+1)}(x)}{(k+1) !} \right).$$

\medskip
\noindent  
We now establish some properties of the function $V_k$ which are necessary to  the proof of the  Proposition \ref{propB} below.
 
 \begin{lemma}\label{lemaB} 
Let  $h$ be a  step function at $x=z$  and $M_k$ the $(k+1)\times (k+1)$ matrix whose rows are  the derivatives $V_k^{(0)}(z), V_k^{(1)}(z), \ldots, V_k^{(k)}(z)$ of $V_k$ in  \eqref{hs1}. Then, the set
%\begin{equation}\label{hs3}
$$
{\cal V}_k= \left\{ V_k^{(0) }(z), V_k^{(1) }(z), \ldots, V_k^{(k) }(z) \right\},
$$
%\end{equation}
is  an orthogonal basis of $\Rb^{k+1}$. In particular, the matrix $M_k$ is diagonal and its rows   are
\begin{equation}\label{num2}
\begin{array}{ll}
 V_k^{(0)}(z)&=( 1,0,0,0,\cdots,0   )\\
V_k^{(1)}(z)&=( 0,-1,0,0,\cdots,0   )\\
V_k^{(2)}(z)&=( 0,0,2!,0,\cdots,0   )\\
V_k^{(3)}(z)&=( 0,0,0,- 3!,\cdots,0   )\\
&\vdots\\
V_k^{(k)}(z)&=(0,0,0,0,\cdots, (-1)^k\, k!).
\end{array}
\end{equation}

\end{lemma}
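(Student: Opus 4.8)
The plan is to compute each derivative vector $V_k^{(i)}(z)$ componentwise by applying the Fa\`a di Bruno / Leibniz machinery to the entries $h^m(x)$ of $V_k(x)=(1,h,h^2,\ldots,h^k)$, and to show that the $i$-th derivative kills every component except the $(i+1)$-st, which becomes $(-1)^i\, i!$. Concretely, the $(m+1)$-st component of $V_k$ is the function $x\mapsto h(x)^m$, so I would record the elementary fact that for a function $h$ vanishing at $z$, the derivative $\dfrac{d^i}{dx^i}\big(h^m\big)\big|_{x=z}$ is zero whenever $i<m$ (each term in the expansion still contains an undifferentiated factor of $h(z)=0$), and whenever $i>m$ as well provided the lower derivatives of $h$ are suitably controlled; and when $i=m$ it equals $m!\,\big(h^{(1)}(z)\big)^m = m!\,(-1)^m$ by the multinomial expansion of the $m$-th derivative of a product. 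This is exactly where the step-function hypotheses \eqref{hs} enter: $h(z)=0$ forces the below-diagonal entries to vanish, $h^{(1)}(z)=-1$ produces the sign $(-1)^m$ on the diagonal, and the conditions $h^{(i)}(z)=0$ for $2\le i\le j$ are what force the above-diagonal entries to vanish.

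More precisely, I would argue by the following steps. First, fix $m$ with $0\le m\le k$ and analyze $g_m(x):=h(x)^m$. For $i<m$: by Leibniz applied iteratively (or directly to the $m$-fold product $h\cdots h$), every term of $g_m^{(i)}$ is a product of $m$ factors each of which is some derivative $h^{(r)}$, and since only $i<m$ derivatives are distributed among $m$ factors, at least one factor is $h^{(0)}=h$, which vanishes at $z$; hence $g_m^{(i)}(z)=0$. For $i=m$: again $g_m^{(m)}(z)$ is a sum over ways of distributing $m$ derivatives among $m$ factors; the only surviving terms at $z$ are those in which no factor is left undifferentiated, i.e. each factor receives exactly one derivative, giving $m!\,\big(h^{(1)}(z)\big)^m=(-1)^m m!$. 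For $i>m$ (this is the genuinely delicate case, handled for $i\le k\le j$): distributing $i>m$ derivatives among the $m$ factors of $h^m$ forces, by pigeonhole, at least one factor to receive at least two derivatives, i.e. a factor $h^{(r)}$ with $r\ge 2$; if \emph{every} surviving term has some factor $h^{(r)}(z)$ with $2\le r\le j$ equal to zero we are done. One shows the remaining terms — those that would avoid both an undifferentiated factor and a factor with $r\ge 2$ — cannot occur when $i>m$, again by counting. Hence $g_m^{(i)}(z)=0$ for all $i\ne m$ in the range $0\le i,m\le k$, and $g_m^{(m)}(z)=(-1)^m m!$, which is precisely the list \eqref{num2}.

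From here the conclusion is immediate: $M_k$ is the diagonal matrix $\operatorname{diag}\!\big(1,-1,2!,-3!,\ldots,(-1)^k k!\big)$, which is invertible (no diagonal entry is zero), so its rows $V_k^{(0)}(z),\ldots,V_k^{(k)}(z)$ are linearly independent and hence form a basis of $\Rb^{k+1}$; and since $\langle V_k^{(i)}(z),V_k^{(\ell)}(z)\rangle$ is the $(i,\ell)$ entry of $M_k M_k^{\mathsf T}$, which is again diagonal, the basis is orthogonal. I expect the main obstacle to be the $i>m$ bookkeeping described above: one must be careful that the step-function conditions \eqref{hs} are stated only up to order $j$, so the argument requires $k\le j$ (consistent with how $V_k$ and $h$ are paired in the applications), and one should either invoke Fa\`a di Bruno's formula explicitly or set up a clean multinomial-index argument so that "every surviving monomial contains a vanishing factor" is genuinely proved rather than asserted. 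The remaining steps (diagonality, invertibility, orthogonality) are then one-line consequences.
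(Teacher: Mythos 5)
Your proposal is correct and takes essentially the same approach as the paper: you compute each derivative $(h^m)^{(i)}(z)$ by the generalized Leibniz (multinomial) rule and use the step-function conditions to annihilate every off-diagonal entry and to evaluate the diagonal entry as $(-1)^m\, m!$. The paper's own proof is a one-sentence assertion of exactly this fact, so your multinomial bookkeeping simply fills in the details the paper leaves tacit, including the implicit requirement that the step-function order $j$ be at least $k$ so that $h^{(r)}(z)=0$ for all $2\le r\le k$.
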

\begin{proof}
Using the definition of step function,  the $k$\--fold differentiation of  $V_k$ gives  that each vector   $V_k^{(i)}(z)$ has only a nonzero component which is  the $(i+1)$\--th one. In particular, this entry is $V^{(i)}_{k,i+1}(z) = (-1)^i\, i!$, for $i=0,1,\ldots, k$. Therefore, the set  ${\cal V}_k $ is obviously orthogonal and  $M_k$ is diagonal.
\end{proof}

\begin{lemma}\label{lemaA}
For a positive integer $m$,  the sum
%\begin{equation}\label{s1}
$$
s_m=1- \displaystyle{ \frac{ 1}{2}  }\binom{m}{1}+ \displaystyle{ \frac{1 }{3}  }\binom{m}{2}- \ldots + (-1)^m \, \displaystyle{ \frac{ 1}{m+1}  }\binom{m}{m}
$$
%\end{equation}
is
$$
%\begin{equation}\label{s1a}
s_m= \displaystyle{ \frac{1}{m+1}  }.
%\end{equation}
$$
\end{lemma}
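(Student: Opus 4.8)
The plan is to prove the identity $s_m = \frac{1}{m+1}$ by recognizing the alternating binomial sum as an instance of a standard Beta-function evaluation. First I would observe that for each $j = 0, 1, \ldots, m$ one has $\frac{1}{j+1} = \int_0^1 x^{j}\, dx$, so that
\[
s_m = \sum_{j=0}^m (-1)^j \binom{m}{j} \int_0^1 x^{j}\, dx = \int_0^1 \sum_{j=0}^m \binom{m}{j} (-x)^{j}\, dx.
\]
The inner sum is, by the binomial theorem, exactly $(1-x)^m$. Hence
\[
s_m = \int_0^1 (1-x)^m\, dx = \left[ -\frac{(1-x)^{m+1}}{m+1} \right]_0^1 = \frac{1}{m+1},
\]
which is the claimed value.

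An alternative, purely algebraic route (useful if one prefers to avoid integrals) is induction on $m$, using the Pascal rule $\binom{m}{j} = \binom{m-1}{j} + \binom{m-1}{j-1}$ to split $s_m$ into two sums, reindex the second, and reduce to $s_{m-1}$; the telescoping that results again yields $\frac{1}{m+1}$. A third option is to note that $(-1)^j \frac{1}{j+1}\binom{m}{j} = \frac{1}{m+1}(-1)^j \binom{m+1}{j+1}$, so that $(m+1)\, s_m = \sum_{j=0}^{m} (-1)^j \binom{m+1}{j+1} = -\sum_{i=1}^{m+1} (-1)^{i} \binom{m+1}{i} = 1 - \sum_{i=0}^{m+1} (-1)^i \binom{m+1}{i} = 1 - 0 = 1$, using the vanishing of the full alternating binomial sum for $m+1 \geq 1$.

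I would present the integral argument as the main proof, since it is the shortest and most transparent: the only step requiring a word of justification is the interchange of the finite sum with the integral, which is immediate because the sum has finitely many terms. There is essentially no obstacle here — the identity $\frac{1}{j+1} = \int_0^1 x^j\,dx$ together with the binomial theorem does all the work, and the closing evaluation of $\int_0^1 (1-x)^m\,dx$ is elementary. The only point to be careful about is the indexing convention, namely that the $j$-th term of $s_m$ carries the factor $\frac{1}{j+1}\binom{m}{j}$ with sign $(-1)^j$, matching the displayed expression in the statement with $j$ running from $0$ to $m$.
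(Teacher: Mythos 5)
Your proof is correct, and your primary argument (the integral representation $\frac{1}{j+1}=\int_0^1 x^j\,dx$ followed by the binomial theorem) takes a genuinely different route from the paper, which merely asserts that the result is ``straightforward by induction on $m$ and by well-known properties of the binomial coefficients'' without supplying details. Your integral approach is arguably cleaner: it is a one-line computation with no case analysis, and it makes the value $\frac{1}{m+1}$ appear transparently as $\int_0^1(1-x)^m\,dx$. Your third alternative, rewriting $\frac{1}{j+1}\binom{m}{j}=\frac{1}{m+1}\binom{m+1}{j+1}$ and reducing to the vanishing of the full alternating binomial sum, is perhaps closest in spirit to the ``well-known properties of the binomial coefficients'' the paper alludes to, and is a nice purely combinatorial alternative. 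Your second option, induction via Pascal's rule, matches the paper's stated strategy. All three are sound; the integral argument gains brevity and transparency, while the combinatorial identity gains self-containedness (no calculus). The only caveat worth keeping in mind is that the interchange of the finite sum and the integral, which you correctly flag, needs no justification beyond linearity; you handled the indexing convention carefully and there is no gap.
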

\begin{proof}
The proof is straightforward  by induction on $m$ and by well-known properties of the binomial coefficients.
\end{proof}

\begin{proposition}\label{propB}
Let $f$ be a function satisfying the hypotheses of Proposition \ref{prop1},  $h$ a  step function and $\phi_k$  given by   \eqref{eq10B}. 
 Then, $\phi_k$ is a model function and   $t_k= x- \left[\phi_k(x)\right]^{-1} f(x)$ has local order of convergence at least $k+2$.
 \end{proposition}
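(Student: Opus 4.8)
The plan is to verify directly that $\phi_k$ satisfies the $k+1$ model-function conditions \eqref{eq2}, namely $\phi_k^{(i)}(z) = f^{(i+1)}(z)/(i+1)$ for $i=0,1,\ldots,k$; once this is done, Proposition \ref{prop1} immediately gives that $t_k$ has local order of convergence at least $k+2$. So the whole task reduces to computing the derivatives $\phi_k^{(i)}(z)$.

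First I would exploit the inner-product representation $\phi_k(x) = \langle U_k(x), V_k(x)\rangle$. Differentiating $i$ times and using the Leibniz rule for the derivative of an inner product (bilinearity), one gets
$$
\phi_k^{(i)}(z) = \sum_{r=0}^{i} \binom{i}{r} \langle U_k^{(r)}(z), V_k^{(i-r)}(z)\rangle .
$$
Now Lemma \ref{lemaB} is the key simplification: each vector $V_k^{(i-r)}(z)$ has a single nonzero component, in position $i-r+1$, equal to $(-1)^{i-r}(i-r)!$. Hence $\langle U_k^{(r)}(z), V_k^{(i-r)}(z)\rangle$ equals $(-1)^{i-r}(i-r)!$ times the $(i-r+1)$-th component of $U_k^{(r)}(z)$, which is $\big(f^{(i-r+1)}/(i-r+1)!\big)^{(r)}$ evaluated at $z$ — that is, $f^{(i-r+1+r)}(z)/(i-r+1)! = f^{(i+1)}(z)/(i-r+1)!$. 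Substituting and writing $s = i-r$, the sum collapses to
$$
\phi_k^{(i)}(z) = \sum_{s=0}^{i} \binom{i}{s} \frac{(-1)^{s} s!}{(s+1)!}\, f^{(i+1)}(z) = f^{(i+1)}(z)\sum_{s=0}^{i} \frac{(-1)^{s}}{s+1}\binom{i}{s}.
$$

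Finally I would invoke Lemma \ref{lemaA} with $m=i$: the alternating sum $\sum_{s=0}^{i}\frac{(-1)^s}{s+1}\binom{i}{s}$ equals $\frac{1}{i+1}$. Therefore $\phi_k^{(i)}(z) = f^{(i+1)}(z)/(i+1)$ for every $i=0,1,\ldots,k$, which is precisely \eqref{eq2}. Applying Proposition \ref{prop1} concludes the proof. The only mildly delicate point — the "main obstacle" — is bookkeeping in the Leibniz expansion: one must be careful that differentiating the $(i-r+1)$-th component of $U_k$ exactly $r$ times is legitimate (it requires $i-r+1+r = i+1 \le k+1$, i.e. $i \le k$, which holds), and that the single nonzero entry of $V_k^{(i-r)}(z)$ always lands in a component index $\le k+1$ that actually exists in $U_k^{(r)}(z)$; both hold for $0\le i\le k$. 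Once the index arithmetic is pinned down, everything reduces to the two lemmas already proved.
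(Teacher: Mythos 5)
Your proof is correct and rests on exactly the same ingredients as the paper's: the inner-product representation $\phi_k = \langle U_k, V_k\rangle$, the Leibniz rule, the diagonal structure of the matrix $M_k$ from Lemma~\ref{lemaB}, and the alternating binomial identity of Lemma~\ref{lemaA}. The only difference is organizational: the paper phrases the argument as an induction (nominally ``on $k$,'' though it is really iterating through derivative orders, and in fact the induction hypothesis is never used in the inductive step), whereas you compute $\phi_k^{(i)}(z)$ directly and uniformly for every $0\le i\le k$. Your direct computation is cleaner and makes transparent why the sum collapses — each term of the Leibniz expansion picks out a single entry of $U_k^{(r)}(z)$ via the lone nonzero component of $V_k^{(i-r)}(z)$, and the resulting scalar sum is exactly the one Lemma~\ref{lemaA} evaluates. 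Your remark about index bookkeeping (that $i-r+1 \le k+1$ always holds for $r\le i\le k$) is also the right thing to flag; it is implicit but unremarked in the paper.
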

\begin{proof}  
Once we prove that $\phi_k$ is a model function the result that $t_k$ has local order of convergence at least $k+2$ follows from Proposition~\ref{prop1}.

\medskip
\noindent
Let us use induction on $k$ to prove that $\phi_k$ is a function satisfying \eqref{eq2}. For $k=1$,  the function $\phi_1$ can be written as the inner product
%\begin{equation}\label{C1}
$$
\phi_1(x)=\langle U_1(x), V_1(x)\rangle,
$$
%\end{equation}
with
$$
\begin{array}{l}
U_1(x)= \left( f^{(1)} (x),  \displaystyle{ \frac{f^{(2)}(x)}{2 !}   }\right)\quad \mbox{and}\quad
V_1(x)= \left( 1, h(x)\right).
\end{array}
$$
Since $h$ is a  step function it satisfies  $h(z)=0$ and $h^{(1)} (z)=-1$, and so
$$
\phi_1(z)= \phi_1^{(0)}(z)= \langle U_1(z), V_1(z)\rangle= f^{(1)}(z).
$$
Now, the first derivative of $\phi_1$ is 
$$
\phi_1^{(1)}(x)=\langle U_1^{(1)}(x), V_1^{(0)}(x)\rangle+ \langle U_1^{(0)}(x), V_1^{(1)}(x)\rangle.
$$
At $x=z$, we have
$$
\begin{array}{l}
U_1^{(0)}(z)= \left( f^{(1)} (z),  f^{(2)}(z)/2\right),\qquad  
U_1^{(1)}(z)= \left( f^{(2)} (z), f^{(3)}(z)/2\right)
\end{array}
$$
and by  Lemma \ref{lemaB},  we obtain  $V_1^{(0)}(z)= ( 1,0)$ and $V_1^{(1)}(z)= (0,-1)$. 
Then,
\begin{align*}\nonumber\label{C2}
\phi_1^{(1)}(z)&=\langle U_1^{(1)}(z), V_1^{(0)}(z)\rangle+\langle U_1^{(0)}(z), V_1^{(1)}(z)\rangle\\
&= f^{(2)}(z)-  \displaystyle{ \frac{f^{(2)}(z)}{2 }   }=  \displaystyle{ \frac{f^{(2)}(z)}{2 }   }.
\end{align*}
Thus, condition \eqref{eq2}  holds for $k=1$.

\medskip
\noindent
For an integer $m\geq  2$, the induction basis  is  $h^{(1)}(z)=-1$,    $h^{(i)}(z)=0$, with $i=0,2,\ldots,m$,  and  $\phi_m^{(0)}(z)=f^{(1)} (z)$,  $\phi_m^{(1)}(z)=f^{(1)} (z)/2$, $\ldots$, $\phi_m^{(m-1)}(z)=f^{(m)} (z)/m$. As
%\begin{equation}\label{prov1}
$$
\phi_m(x)=\langle U_m(x),V_m(x)\rangle,
$$
%\end{equation}
the derivative of  order $m$ of the  inner product gives
\begin{equation}\label{eqesd}
\begin{array}{ll}
\phi_m^{(m)}(x)&= < U_m^{(m)}(x), V_m^{(0) }  (x)   >+ \binom{m}{1}\, < U_m^{(m-1)}(x), V_m^{(1)}  (x)   >+\\
\\
& \hspace{0.5cm}+ \binom{m}{2}\, < U_m^{(m-2)}(x), V_m^{(2)} (x)> +
\cdots+ \binom{m}{m}\, < U_m^{(0)}(x), V_m^{(m)}(x)>. 
\end{array}  
\end{equation}
Let us prove that $\phi_m^{(m)}(z)=f^{(m+1)} (z)/(m+1)$.   We have
%\begin{equation}\label{prov2}
$$
\begin{array}{l}
U_m^{(0)}(z)= \left(  f^{(1)} (z),\,  \displaystyle{ \frac{f^{(2)}(z)}{2! }   },\cdots , \displaystyle{ \frac{f^{(m+1)}(z)}{(m+1)! }   }  \right)\\
U_m^{(1)}(z)= \left(  f^{(2)} (z),\,  \displaystyle{ \frac{f^{(3)}(z)}{2! }   },  \cdots,  \displaystyle{ \frac{f^{(m+2)}(z)}{(m+1)! }   }  \right)\\
\hspace{2cm} \vdots\\
U_m^{(m)}(z)= \left(  f^{(m+1)} (z),\,  \displaystyle{ \frac{f^{(m+2)}(z)}{2! }   }, \cdots,  \displaystyle{ \frac{f^{(2\,m+1)}(z)}{(m+1)! }   }  \right),
\end{array}
$$
%\end{equation}
 and so  by \eqref{num2} the evaluation 
 of \eqref{eqesd} at $x=z$,  gives
$$
\begin{array}{ll}
\phi_m^{(m)}(z)&=f^{(m+1  )} (z)- \displaystyle{ \frac{\binom{m}{1}}{2}   }\, f^{(m+1  )}(z)+\\
&\hspace{0.5cm}+\displaystyle{ \frac{\binom{m}{2}}{3}   }\, f^{(m+1  )}(z)-
\cdots + (-1)^m \displaystyle{ \frac{\binom{m}{m}}{m+1}   }\, f^{(m+1  )}(z)= \displaystyle{ \frac{f^{(m+1)} (z)}{m+1}   } ,
\end{array}
$$
where the last equality follows from Lemma \ref{lemaA}, which completes the proof.
\end{proof}

\medskip
\noindent
A function $\phi_k$ of the form \eqref{eq10B} will be called a Taylor\--type model function and the  map $t_k$ given by \eqref{eq11B} will be referred as a Taylor\--type iterative map.

\medskip
\noindent In paragraph \ref{subbar} we will discuss another family of iterative maps which is computationally more interesting than Taylor-type ones in the sense  it is deduced from a set of model functions $\phi_k$ which  only uses the first derivative of the function $f$.

\medskip
\noindent
The first four Taylor\--type maps are displayed in Table \ref{tab1}. Formulas  similar to those in Table \ref{tab1} have been attributed to Euler and Chebyshev (see Traub \cite{traub}).
    \begin{table}
$$
\hspace{-1cm}
\begin{array}{ | l | }
\hline
t_1(x) = x- \displaystyle{ \frac{2\, f(x)}{2\, f^{(1)}(x)+ f^{(2)}(x)\, h(x)}   }\\
t_2(x) =  x- \displaystyle{ \frac{6\, f(x)^{}}{6\, f^{(1)}(x)+3  f^{(2)}(x)\, h(x)+  f^{(3)}(x)\, h^2(x)}   }\\
t_3(x)= x- \displaystyle{ \frac{24\, f(x)}{24\, f^{(1)}(x)+12  f^{(2)}(x)\, h(x)+4\,  f^{(3)}(x)\, h^2(x)+ f^{(4)}(x)\, h^3(x)}   }\\
t_4(x) = x- \displaystyle{ \frac{120\, f(x)}{120\, f^{(1)}(x)+60  f^{(2)}(x)\, h(x)+20\,  f^{(3)}(x)\, h^2(x)+5\, f^{(4)}(x)\, h^3(x)+ f^{(5)}(x)\, h^4(x)}   }\\
\hline
  \end{array}
  $$
  \caption{ First four Taylor's type maps. \label{tab1}}
  \end{table}

%============================================================================
\subsection{The  barycentric\--type  maps}\label{subbar}

\noindent
The Taylor-type iterative maps $t_k$  were constructed using a model function $\phi_k$  defined as an inner product  of two vectorial functions $U_k$ and $V_k$ depending respectively on the first $k$ derivatives of $f$ and on the powers of the step function $h$.
 We now consider another type of iterative maps $t_k$ by modifying the model function $\phi_k$ as follows: $\phi_k$ is the inner product  of a constant vectorial function  $U_k$  and a  function  $V_k$ depending only on the first derivative $f^{(1)}$  evaluated at $x+ i\, h(x)$, for $i=0,\ldots, k$. Notably, we take
\begin{equation}\label{lc0}
U_k(x) = \left( a_0,a_1,a_2,\cdots, a_k  \right) =\mathbf{a},
\end{equation}
and
\begin{equation}\label{lc1}
V_k(x)=\left( f^{(1)}(x),f^{(1)}(x+h(x)),\cdots, f^{(1)}(x+k\, h(x))  \right),
\end{equation}
where $h$ is a step function. 
If one proves that $\phi_k=\langle\mathbf{a}, V_k(x)\rangle$ is a model function
 then, by Proposition~\ref{prop1},  the respective process $t_k(x)= x-\left[\phi_k(x)\right]^{-1} f(x)$ has  order of convergence at least $k+2$. 

\medskip
\noindent
The next proposition shows that $\phi_k$ is a model function if and only if $U_k(x)=\mathbf{a}$ is the unique  solution  of a non homogeneous linear system. Moreover,  this solution represents the barycentric coordinates of   $\phi_k$ in a basis defined by the components of $V_k$.

\begin{proposition}\label{lemaC}
Let $f$  be a function satisfying the hypotheses of Proposition~\ref{prop1},  $h$ a  step function, $k\geq 0$ a fixed integer and
 $\phi_k=\langle U_k, V_k\rangle$, 
  with $U_k$ and $V_k$ defined by \eqref{lc0} and \eqref{lc1}. That is, 
  \begin{equation}\label{lc4}
\phi_k(x)=a_0 f^{(1)} (x)+ a_1 f^{(1)}( x+ h(x)) +\cdots + a_k f^{(1)}( x+k\, h(x)).
\end{equation}
Then, the derivative of order $k$  of $V_k$, evaluated at $x=z$,  is $V^{(k)}_k(z)=D_k \, R_k$,
%\begin{equation}\label{lc3}
%$
%\begin{array}{l}
%V^{(k)}_k(z)= M_k=D_k \, R_k,\\ 
%\end{array}
%$
%\end{equation}
where $D_k$ and $R_k$ are the following $(k+1)\times (k+1)$ matrices 
$$D_k=\operatorname{diag} \left( f^{(1)}(z), f^{(2)}(z),\cdots, f^{(k+1)}(z)\right),
$$
and
%\begin{equation}\label{lc3a}
$$
R_k=\begin{bmatrix}
1&1&1&1&\ldots&1\\
1&0&-1&-2&\ldots&-(k-1)\\
1&0&1&2^2&\ldots&(k-1)^2\\
\vdots&\vdots&\vdots&\vdots&\vdots&\vdots\\
1&0&(-1)^k &(-1)^k 2^k&\ldots&(-1)^k (k-1)^k\\
\end{bmatrix}.
$$
%\end{equation}
Furthermore,
\begin{itemize}
\item[(i)] The function $\phi_k$ is a model function
if and only if $$U_k=\mathbf{a}=\left( a_0,a_1,a_2,\cdots, a_k  \right) $$ is the (unique) solution of the  linear system
\begin{equation}\label{lc5}
R_k\, \mathbf{a} =\mathbf{b},\quad \text{with}\quad \mathbf{b}=\left(1,1/2,1/3,\cdots,1/(k+1)\right).
\end{equation}
Also, this solution satisfies the equality 
\begin{equation}\label{lc8}
\sum_{i=0}^k a_i=1.
\end{equation}
\item[(ii)] If $f^{(i)}(z)\neq 0$ for $i=1, \ldots, k$, then the function $U_k=\mathbf{a}$  represents   the (normalized) barycentric coordinates of the model function  $\phi_k$ relative to the
basis,
%\begin{equation}\label{lc9}
$$
{\cal V}_k=\left\{  f^{(1)} (x),  f^{(1)}( x+ h(x)),\cdots,  f^{(1)}( x+k\, h(x))  \right\}.
$$
%\end{equation}
\end{itemize}
Moreover,  the iterative process generated by $t_k(x)= x-\left[\phi_k(x)\right]^{-1} f(x)$ has   order of convergence at least $k+2$.
\end{proposition}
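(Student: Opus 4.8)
The plan is to compute the derivatives $V_k^{(i)}(z)$ for $i = 0, 1, \ldots, k$ explicitly, translate the model-function conditions \eqref{eq2} into a linear system, and then identify that system with $R_k \mathbf{a} = \mathbf{b}$. First I would differentiate each component $g_i(x) := f^{(1)}(x + i\, h(x))$ using the chain rule. Since $h$ is a step function, at $x = z$ we have $h(z) = 0$, $h^{(1)}(z) = -1$, and $h^{(m)}(z) = 0$ for $2 \le m \le j$; hence the inner argument $x + i\,h(x)$ equals $z$ at $x=z$, its first derivative is $1 - i$, and its higher derivatives (up to order $j$) vanish. Feeding this into Fa\`a di Bruno's formula, only the term corresponding to the single "first-derivative-to-the-$m$th-power" partition survives, so $g_i^{(m)}(z) = f^{(m+1)}(z)\,(1-i)^m$ for $0 \le m \le j$. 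Stacking these over $i = 0, \ldots, k$ (and taking $j = k$), the $m$-th derivative vector is $V_k^{(m)}(z) = f^{(m+1)}(z)\,\bigl((1-0)^m, (1-1)^m, (1-2)^m, \ldots, (1-k)^m\bigr)$, which is precisely row $m$ (0-indexed) of $D_k R_k$ — the factor $f^{(m+1)}(z)$ being the diagonal entry of $D_k$ and the row $((1-i)^m)_{i=0}^k = (1, 0, (-1)^m, (-2)^m, \ldots, (-(k-1))^m)$ being row $m$ of $R_k$. This establishes the claimed factorization $V_k^{(k)}(z) = D_k R_k$ (read as the matrix of stacked derivative rows).

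Next, for part (i), I would write out what \eqref{eq2} demands: $\phi_k^{(m)}(z) = f^{(m+1)}(z)/(m+1)$ for $m = 0, \ldots, k$. Since $\phi_k(x) = \langle \mathbf{a}, V_k(x)\rangle$ with $\mathbf{a}$ constant, differentiation gives $\phi_k^{(m)}(x) = \langle \mathbf{a}, V_k^{(m)}(x)\rangle$, so at $z$ we get $\phi_k^{(m)}(z) = f^{(m+1)}(z)\,\langle \mathbf{a}, r_m\rangle$ where $r_m$ is the $m$-th row of $R_k$. Because $f^{(m+1)}(z) \ne 0$ is not assumed for all $m$ but $f^{(1)}(z) \ne 0$ is (simple zero) — actually the equation $\phi_k^{(m)}(z) = f^{(m+1)}(z)/(m+1)$ becomes $f^{(m+1)}(z)(\langle \mathbf{a}, r_m\rangle - 1/(m+1)) = 0$; when $f^{(m+1)}(z) \ne 0$ this forces $\langle \mathbf{a}, r_m\rangle = 1/(m+1)$, and I would note the statement already builds in the nonvanishing hypothesis for the barycentric interpretation, while for the bare model-function characterization one can observe that requiring $\phi_k$ to be a model function for all admissible $f$ (or adopting the convention that the conditions be read as identities in the symbols $f^{(m+1)}(z)$) yields exactly $R_k \mathbf{a} = \mathbf{b}$. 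The matrix $R_k$ is invertible — it is essentially a Vandermonde-type matrix in the nodes $1-i$, $i = 0, \ldots, k$, which are distinct, after accounting for the special first column; so the solution $\mathbf{a}$ is unique. The identity $\sum_{i=0}^k a_i = 1$ in \eqref{lc8} follows by reading off the first equation $\langle \mathbf{a}, r_0\rangle = 1$, since $r_0 = (1,1,\ldots,1)$.

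For part (ii), the observation is essentially definitional: when $f^{(i)}(z) \ne 0$ for $i = 1, \ldots, k$ (so that the vectors $f^{(1)}(z), f^{(2)}(z)(1-i)$-type rows, i.e.\ the columns of $D_k R_k$, span appropriately), the basis $\mathcal{V}_k$ of first-derivative evaluations is genuinely a basis in the relevant sense, and the $a_i$ with $\sum a_i = 1$ are by definition the normalized barycentric coordinates of $\phi_k$ expressed in that basis. I would state this and refer back to the system \eqref{lc5}. Finally, the order-of-convergence conclusion is immediate: once $\phi_k$ is shown to be a model function satisfying \eqref{eq2} with $j = k$, Proposition~\ref{prop1} directly yields that $t_k(x) = x - [\phi_k(x)]^{-1} f(x)$ converges locally with order at least $k+2$.

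I expect the main obstacle to be the careful execution of the Fa\`a di Bruno / Leibniz bookkeeping that shows $g_i^{(m)}(z) = f^{(m+1)}(z)(1-i)^m$ cleanly — specifically, verifying that every multi-index term in the higher-derivative expansion except the pure $(h^{(1)})^m$ one is killed by the vanishing of $h^{(m)}(z)$ for $2 \le m \le k$ and of $h(z)$, and that the surviving term carries coefficient exactly $1$. A secondary subtlety is the logical status of the "if and only if" in (i) given that some $f^{(m+1)}(z)$ might vanish; I would handle this by making precise (matching the paper's evident intent) that the model-function conditions are imposed as identities independent of the particular values $f^{(m+1)}(z)$, so that the equivalence with $R_k\mathbf{a} = \mathbf{b}$ holds unconditionally, and the nonvanishing hypothesis is only invoked for the barycentric-coordinate interpretation in (ii). The invertibility of $R_k$ is routine — expand along the second column, which has a single nonzero entry, reducing to a genuine Vandermonde determinant in the distinct nodes $-1, -2, \ldots, -(k-1)$ together with the extra first row/column, or simply cite that confluent/shifted Vandermonde systems of this shape are nonsingular.
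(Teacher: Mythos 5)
Your proof follows essentially the same route as the paper: compute $V_k^{(m)}(z) = f^{(m+1)}(z)\bigl((1-i)^m\bigr)_{i=0}^k$ via the step-function properties (the paper does this row by row; you package it as Fa\`a di Bruno, which is equivalent and slightly more systematic), then read off $R_k\mathbf{a}=\mathbf{b}$ from the model-function conditions, invoke invertibility of the shifted Vandermonde $R_k$, obtain $\sum a_i = 1$ from the first row, argue linear independence of $\mathcal{V}_k$ for part (ii), and close by citing Proposition~\ref{prop1}. Your flag about the ``if and only if'' in (i) being vacuous at $x=z$ when some $f^{(m+1)}(z)=0$ is a legitimate observation that the paper silently glosses over; your suggested reading (impose the conditions as identities in the symbols $f^{(m+1)}(z)$) is the natural repair and does not change the argument.
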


\begin{proof}
For $i=0,1,\ldots,k$ the  derivatives of order $i$ of  $V_k$,  evaluated at  $x=z$, are:
\begin{align*}\nonumber
V_k^{(0)}& = f^{(1)} (z) \left(1,1,1,1,1,\cdots,1     \right)\\\nonumber
V_k^{(1)}&= f^{(2)} (z) \left(1,0,-1,-2,-3,\cdots,-(k-1)     \right)\\ \nonumber
V_k^{(2)}&= f^{(3)} (z) \left(1,0,1,2^2,3^2,\cdots,(k-1)^2     \right)\\\nonumber
\vdots \\
%\label{lc2}
V_k^{(k)}&= f^{(k+1)} (z) \left(1,0,(-1)^k, (-1)^k 2^k, (-1)^k 3^k, \cdots, (-1)^k (k-1)^k     \right).
\end{align*}
So, the equalities \eqref{lc5} hold.

\medskip
\noindent
For (i), since $\phi_k=\langle \mathbf{a}, V_k(x)\rangle$,   it is straightforward to verify that the conditions \eqref{eq2} for $\phi_k$ to be a model function are equivalent to the system $R_k\,\mathbf{a} =\mathbf{b}$. So, $U_k=\mathbf{a}$ must be a solution of this system. As $R_k$ is nonsingular, this is the unique solution. Furthermore, since $z$ is a simple zero of $f$, the equality  \eqref{lc8} holds because it is just the first equation of the system $R_k\,\mathbf{a} =\mathbf{b}$.

\medskip
\noindent
For (ii), we need to show that for $\mathbf{\alpha}=(\alpha_0, \alpha_1, \ldots, \alpha_k)\in \Rb^{k+1}$, such that
\begin{equation}\label{above}
\alpha_0 f^{(1)}(x)+ \alpha_1 f^{(1)}(x+h(x))+ \ldots+  \alpha_k f^{(k)}(x+k\, h(x))=\mathbf{0},
\end{equation}
the only solution is $ \mathbf{\alpha}=\mathbf{0}$. Differentiating  \eqref{above} and evaluating  at $x=z$, we obtain the homogeneous linear system
$$
\operatorname{diag}\left(f^{(1)} (z), f^{(2)} (z), \cdots, f^{(k+1)} (z)\right) R_k\, \mathbf{\alpha}= \mathbf{0},
$$
which admits only the   solution $\mathbf{\alpha}=\mathbf{0}$ since  both the diagonal matrix and $R_k$ are nonsingular.

\medskip
\noindent
The last assertion follows from Proposition~\ref{prop1} since by item (i) $\phi_k$ is a model function.
\end{proof}

\noindent
The expressions for the first five barycentric  maps are shown in Table \ref{tab2}.
    \begin{table}
$$
\hspace{-1.9cm}
\small
\begin{array}{ |   l  | }
\hline
t_1(x)= x- \displaystyle{ \frac{2\, f(x)}{f^{(1)}(x)+ f^{(1)}(x+h(x))}   }\\
t_2(x)=  x- \displaystyle{ \frac{12\, f(x)}{5\, f^{(1)}(x)+8  f^{(1)}(x+h(x))-  f^{(1)}(x+2\,h(x))}   }\\
t_3(x)= x- \displaystyle{ \frac{24\, f(x)}{9\, f^{(1)}(x)+19  f^{(1)}(x+ h(x))-5\,  f^{(1)}(x+ 2\,h(x))+ f^{(1)}(x+3\, h(x))}   }\\
t_4(x)= x- \displaystyle{ \frac{720\, f(x)}{251\, f^{(1)}(x)+646  f^{(1)}(x+h(x))-264\,  f^{(1)}(x+2\, h(x))+106\, f^{(1)}(x+3 h(x))-19  f^{(1)}(x+4 h(x))}   }\\
t_5(x)= x- \displaystyle{ \frac{1440\, f(x)}{\sum_{i=0}^{5} \alpha_1\, f^{(1)}(x+i\, h(x))}   }, \quad \mbox{with}\quad \alpha_0=475, \, \alpha_1=1427,\, \alpha_2=-798\\
\hspace{6cm}\begin{array}{l}
\alpha_3=482, \, \alpha_4=-173,\, \alpha_5=27\\
\end{array}\\
\hline
  \end{array}
  $$
  \caption{ First five barycentric type maps. \label{tab2}}
  \end{table}

%==================================
\section{Recursive families of iterative maps}\label{secrec}

\noindent
We recall that a model function $\phi$ depends on a  certain step function $h$. Now, for each model function $\phi_j$ entering in the definition of the map $t_j=x-\left[ \phi_j(x) \right]^{-1} f(x)$, we use a step function which is defined recursively by $h_j(x)=t_{j-1}(x)-x$.  The  starter $t_0$ will be taken to be the Newton's map $t_0(x)=x-[f^{(1)}(x)]^{-1} f(x)$.
 The next proposition shows that the  iterative map $t_m$, defined recursively in \eqref{step1A},  has local order of convergence $m+2$.
\begin{proposition}\label{propD} Let $z$ be a simple zero of a given  function $f$ and   $t_0 $ the Newton's  map
$$t_0(x)=x-[f^{(1)}(x)]^{-1} f(x).$$
\medskip
\noindent
For a given natural number $m\geq 1$, define recursively the step function $h_m$ and the iterative map $t_m$ by
\begin{equation}\label{step1A}
\begin{array}{l}
h_j(x)= t_{j-1}(x)-x\\
\hspace{5cm}\quad j=1,2,\cdots,m\\
t_j(x)= x- \left[ \phi_j(x)   \right]^{-1}\, f(x),
\end{array}
\end{equation}
where $\phi_j $ is  constructed using $h_j$ as step function   and $\phi_j$ is either a Taylor\--type or a barycentric\--type map, respectively given by \eqref{eq10B} and \eqref{lc4}.  Then, the   map $t_m$ has local order  of convergence  at least $m+2$.
\end{proposition}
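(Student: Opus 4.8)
The plan is to argue by induction, and it is convenient to carry through the induction the explicit derivative information at $z$ rather than the analytic limit defining the order of convergence. Precisely, I would establish for every $j=0,1,\ldots,m$ the statement
$$
t_j(z)=z,\qquad t_j^{(1)}(z)=0,\qquad t_j^{(i)}(z)=0\ \ (i=2,\ldots,j+1).
$$
Since each $t_j$ is smooth near $z$ and fixes $z$, a Taylor expansion of $t_j$ about $z$ shows that this set of equalities is equivalent to $t_j$ having local order of convergence at least $j+2$; hence the proposition follows by taking $j=m$, and the local convergence itself is already part of the conclusion of Proposition~\ref{prop1}.

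For the base case $j=0$ the starter is Newton's map, and the conditions $t_0(z)=z$, $t_0^{(1)}(z)=0$ are exactly the content of Remark~\ref{remnovo} (equivalently Proposition~\ref{prop1} with $j=0$, using $\phi_0=f^{(1)}$ and $\phi_0(z)=f^{(1)}(z)$); for $j=0$ there are no further conditions, the index range $i=2,\ldots,1$ being empty. For the inductive step, assume the statement at level $j-1$, i.e. $t_{j-1}(z)=z$, $t_{j-1}^{(1)}(z)=0$ and $t_{j-1}^{(i)}(z)=0$ for $i=2,\ldots,j$. Because $h_j(x)=t_{j-1}(x)-x$ in \eqref{step1A}, differentiating and evaluating at $z$ gives
$$
h_j(z)=0,\qquad h_j^{(1)}(z)=t_{j-1}^{(1)}(z)-1=-1,\qquad h_j^{(i)}(z)=t_{j-1}^{(i)}(z)=0\ \ (i=2,\ldots,j),
$$
so $h_j$ is a step function at $z$ in the sense of Definition~\ref{defstand} for the integer $j$. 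Feeding this $h_j$ into the construction of $\phi_j$ — the Taylor-type model function \eqref{eq10B} or the barycentric-type model function \eqref{lc4} — Proposition~\ref{propB} (respectively Proposition~\ref{lemaC}) shows that $\phi_j$ is a model function, i.e. $\phi_j^{(i)}(z)=f^{(i+1)}(z)/(i+1)$ for $i=0,1,\ldots,j$. Applying Proposition~\ref{prop1} to this $\phi_j$ then reproduces $t_j(z)=z$, $t_j^{(1)}(z)=0$ and $t_j^{(i)}(z)=0$ for $i=2,\ldots,j+1$, which closes the induction, and simultaneously yields that $x_{k+1}=t_j(x_k)$ converges locally with order at least $j+2$.

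The heavy computations — Leibniz's rule for the $\Delta t$ derivatives, the orthogonality of $\mathcal{V}_k$ in Lemma~\ref{lemaB}, and the combinatorial identity of Lemma~\ref{lemaA} — are all already absorbed into Propositions~\ref{prop1},~\ref{propB} and~\ref{lemaC}, so essentially no computation is left in the recursion itself. The one point deserving attention, and the reason I would phrase the induction directly in terms of the numbers $t_j^{(i)}(z)$, is the interface between the two descriptions of ``fast convergence'': one must be certain that $t_{j-1}$ converging with order at least $j+1$ forces $t_{j-1}^{(i)}(z)=0$ for all $i=1,\ldots,j$, since this is exactly what supplies $h_j$ with the correct number of vanishing derivatives to be an admissible step function of order $j$. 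Carrying the derivatives (not the limit) through the recursion makes this automatic, and I do not anticipate any genuine obstacle beyond this bookkeeping.
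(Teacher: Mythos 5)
Your proof takes the same route as the paper: an induction whose only real content is showing that $h_j(x)=t_{j-1}(x)-x$ satisfies the step-function conditions of Definition~\ref{defstand} at level $j$, after which Propositions~\ref{propB} and~\ref{lemaC} supply the model-function property of $\phi_j$ and Proposition~\ref{prop1} gives the order of convergence. If anything your version is more careful than the paper's: the paper's inductive step simply asserts $h_m^{(i)}(z)=0$ for $2\le i\le m$ without indicating the source, whereas you correctly trace these back to the derivative identities $t_{m-1}^{(1)}(z)=0$ and $t_{m-1}^{(i)}(z)=0$ for $i=2,\ldots,m$, which the proof of Proposition~\ref{prop1} establishes en route but which are not part of its stated conclusion (the stated conclusion is a limit, not a list of vanishing derivatives). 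Phrasing the inductive hypothesis in terms of those derivatives, rather than in terms of the order of convergence itself, is precisely the bookkeeping that closes this small gap.
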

\begin{proof} It is only necessary to prove that each function $h_j$ is a step function and the statement follows from Propositions \ref{propB} and \ref{lemaC}. 

\medskip
\noindent
Let us apply   induction on the integer $m$.  
For $m=1$, we  have $h_1(x)=t_0(x)-x$ and so $h_1(z)=0$ and $h_1^{(1)} (z)=-1$.
\medskip
\noindent
Let $m\geq 1$ be an integer. As $h_m^{(0)}(z)=0$, $h_m^{(1)}(z)=-1$ and for any integer $i$ such that $2\leq i \leq m$, we have $h_m^{(i)}(z)=0$, and so $h_m$ is a  step function. 
\end{proof}

\medskip\noindent
We call {\it Newton\--Taylor} and {\it Newton-barycentric} maps those maps $t_k$, defined in the Proposition~\ref{propD}, when one  considers the model function to be respectively a Taylor-type map and a  barycentric\-type map. As before the name of these maps was chosen in order to emphasize that the starter step  function is the Newton map.

%==============================================
\bigskip
\noindent
{\bf Newton\--Taylor maps}\label{subnb}

\medskip
\noindent
Let us compute  the explicit expressions for the first two Newton\--Taylor maps described in the previous proposition. The order of convergence of the first map $t_1$ and of the second map $t_2$ is respectively 3 and 4.
 
 \medskip
\noindent
{\bf Order 3}:
%\begin{equation}\label{eq9}
$$
t_1(x)=x- \displaystyle{ \frac{2\, f(x)}{2\, f^{(1)}(x)+ f^{(2)} (x)\, \left(  \displaystyle{\frac{- f(x)}{f^{(1)}(x)}} \right)}  }= x- \displaystyle{  \frac{2\, f^{(1)} (x)\, f(x)}{2 \left( f^{(1)}(x) \right)^2-f(x)\, f^{(2)}(x)}  }.
$$
%\end{equation}
The map $t_1$ coincides with the celebrated Halley's method (see \cite{halley} and \cite{traub}).

\medskip
\noindent
{\bf Order 4}:
%  \begin{equation}\label{ord4}
$$
t_2(x)= x-\displaystyle{
 \frac{6 f(x)\, (f^{(1)}(x))^2}{  f^2 (x)\, f^{(3)}(x)+ 6\, (f^{(1)}(x))^3- 3 f(x)\, f^{(1)}(x) f^{(2)}(x)}  .
       }
       $$
 %\end{equation}
 
 \bigskip
 \noindent
{\bf Newton\--barycentric maps}\label{subnbA}

\medskip
\noindent
Let us compute the explicit expressions of the first two Newton-barycentric   maps $t_1$ and $t_2$.  Recall that $t_0(x)=x- [f^{(1)} (x)]^{-1}\, f(x)$ and 
\begin{equation}\label{step1}
h_1(x)= t_0(x)-x= -  \frac{f(x)}{f^{(1)} (x)}.
\end{equation}
 Then 
$$\phi_1(x)= a_0 f^{(1)} (x)+ a_1 f^{(1)} \left(x+h_1(x)\right),$$
where $\mathbf{a}= (a_0,a_1)$ is the solution of the linear system \eqref{lc5} with $k=1$. This solution is $\mathbf{a}= (1/2, 1/2)$ and so the Newton\--barycentric map of order 3 is given by

\medskip
\noindent
{\bf Order 3}:
\begin{equation}\label{order3}
t_1(x)=x- \displaystyle{ \frac{2\, f(x)}{f^{(1)}(x)+f^{(1)}(x+h_1(x))  }  },
\end{equation}
with $h_1$ is as in \eqref{step1}.

\bigskip
\noindent
For the map of order 4, we have 
$$\phi_2(x)= a_0 f^{(1)} (x)+ a_1 f^{(1)} \left(x+h_2(x)\right)+ a_2 f^{(1)} \left(x+2h_2(x)\right),
$$
with
$$h_2(x)= t_1(x)-x = \frac{-2 f(x)}{f^{(1)}(x)+f^{(1)}\left(x+h_1(x)\right)},$$
and $\mathbf{a}=(a_0,a_1,a_2)$  is the solution of the system \eqref{lc5} with $k=2$. This solution is $\mathbf{a}=\frac{1}{12}(5,8,-1)$,  which gives

\medskip
\noindent{\bf Order 4}:
\begin{equation}\label{order4}
t_2(x)=x- \displaystyle{
 \frac{12\, f(x)}{
  5\,f^{(1)}(x)+ 8\, f^{(1)} \left(  x-  \frac{2 \, f(x)}{f^{(1)}(x)+f^{(1)}(x+h_1(x))}           \right)-f^{(1)}\left( x-\frac{4\, f(x)}{f^{(1)}(x)+f^{(1)}(x+h_1(x)) }   \right)  
   } .
  }
\end{equation}
The next three Newton-barycentric formulas are given in Table \ref{tab2} where in each $t_k$ the step function $h$  should be substituted by $h_{k}$ given by \eqref{step1A}.

\bigskip
 
\noindent{\bf Extension of the Newton-barycentric maps to $\Rb^n$}\label{extend}

\medskip
\noindent
In the  numerical examples presented in next section we apply Newton\--barycentric maps  to functions $f$ defined in $\Rb^2$. The following modifications were made in order  to extend the Newton\--barycentric maps to functions  $f: \Rb^2\mapsto \Rb^2$. 

\medskip
\noindent
The derivative $f^{(1)}$ is substituted by the Jacobian operator, that is  $f^{(1)}(x)=\left[
\partial f_i (x)/\partial x_j 
 \right]_{i,j=1}^n$.
 Assuming that this  matrix is nonsingular and taking $\Delta t_k(x)= t_k(x)-x$, the respective model function $\phi_k$ verifies
 $$
 \phi_k(x)\, \Delta t_k(x)=-f(x).
 $$
Considering an initial guess $x^{(0)}\in \Rb^n$, each vector resulting from applying the map $t_k$ is computed by solving the linear system
 \begin{equation}\label{orderAA}
 \begin{array}{l}
 \phi_k (x^{(i)})\, \Delta t_k(x^{(i)})= -f (x^{(i)})\\
 x^{(i+1)}=x^{(i)}+ \Delta t_k( x^{(i)}  ), \quad i=0,1,\ldots.
 \end{array}
 \end{equation}

  %===============================
 \section{Numerical examples }\label{secexemp}
 
 In this section we apply the Newton-barycentric  maps in two numerical examples. In Example \ref{ex1} we locate a small number of extrema of a typical least squares problem and in Example \ref{ex2} we locate a great number of local extrema of a function related to Ackley's function (see \cite{ackley}), using the scheme \eqref{orderAA} with $i=0, 1$. 

\medskip
\noindent
In order to find the zeros of such functions let us start by detailing the procedure to be followed.
 For a given function $f$, defined in a rectangle $D=[x_{min}, x_{max}]\times [y_{min}, y_{max}]\subset \Rb^2$, we consider a rectangular grid  in  $D$ having mesh widths $d_x$ and $d_y$. We take for  data points  the  vertices of the grid which are stored in a  list  ${\cal L}$.  
 
 \medskip
 \noindent
 We recall that  an iterative map $t$  of order of convergence at least $2$ leads to a superlinear  iterative process and so,  for any initial point sufficiently close to a fixed point of $t$, the respective iterates either converge to the fixed point or go away from it.  Therefore, given a $k\geq 0$,  applying a Newton-barycentric map $t_k$ to each point in ${\cal L}$, the  image points are either attracted to, or repelled from,  the fixed points of $t_k$ eventually lying in $D$.

 \medskip
 \noindent
 In order to test numerically some of the Newton-barycentric maps we only  apply   two iterations to the data points in the list  ${\cal L}$. For this purpose, we consider $\epsilon$ to be a given tolerance and  denote by $X^{(0)}$ an element of  ${\cal L}$. The first and second iterates of a given map $t_k$ are denoted by  $X^{(1)}$  and  $X^{(2)}$ respectively. For each point $X^{(0)}$ in the list ${\cal L}$ we consider the following algorithm:
\begin{itemize}
\item[1.] If  the Jacobian matrix $J_f(X^{(0)})$ is singular,   the point $X^{(0)}$ is ignored and the next point in  ${\cal L}$ is assigned to  $X^{(0)}$. 
\item[2.] If both $X^{(1)}=t_k(X^{(0)}) $ and $X^{(2)}=t_k(X^{(1)})$ do not belong to the domain $D$,  the  point $X^{(0)}$ is ignored and  $X^{(0)}$ is taken to be the next point in  ${\cal L}$ and proceed to step 1. 
\item[3.] If $|| f(X^{(2)})||\leq \epsilon$ store $X^{(2)}$ in a list ${\cal C }$, otherwise let $X^{(0)}$ be the next point in ${\cal L}$ and go to step 1.
 \end{itemize}
 
 \medskip
 \noindent
 After testing all the elements in ${\cal L}$, if the  list  of the captured points ${\cal C }$  is not empty its elements will cluster near a fixed point of the iterative map $t_k$ in the search domain $D$. So, the plot of the captured points in ${\cal C}$ gives us a picture of the location of the fixed points of $t_k$. In fact, as it is well-known  (see for instance  \cite{dennis1}), for an iterative map of order $p\geq 2$, the error of an iterate $X^{(i)}$ is approximately $X^{(i+1)}-X^{(i)}$ and therefore it is expectable that the point $X^{(2)}$ will be closer to a fixed point of the map than $X^{(0)}$ and $X^{(1)}$. 
 
 \medskip 
 \noindent
 Obviously we are not claiming that only two iterations of a higher order map are sufficient to locate all the simple zeros of a function $f$ in a domain $D$,  by inspecting   the  list  of the captured points ${\cal C }$. For a given tolerance $\epsilon$, one can only say that the captured points are likely to be close approximations of the zeros of $f$ eventually lying in $D$. In particular,  for a discussion on the numerical validation of a few number of iterations of Newton's method the reader is referred to \cite{alefeld}, and  for the fundamental question of proving the existence of zeros of nonlinear maps in $\Rb^n$ see for instance \cite{frommer} and the references therein.

    \begin{figure}[h]
\begin{center} 
 \includegraphics[totalheight=4.2cm]{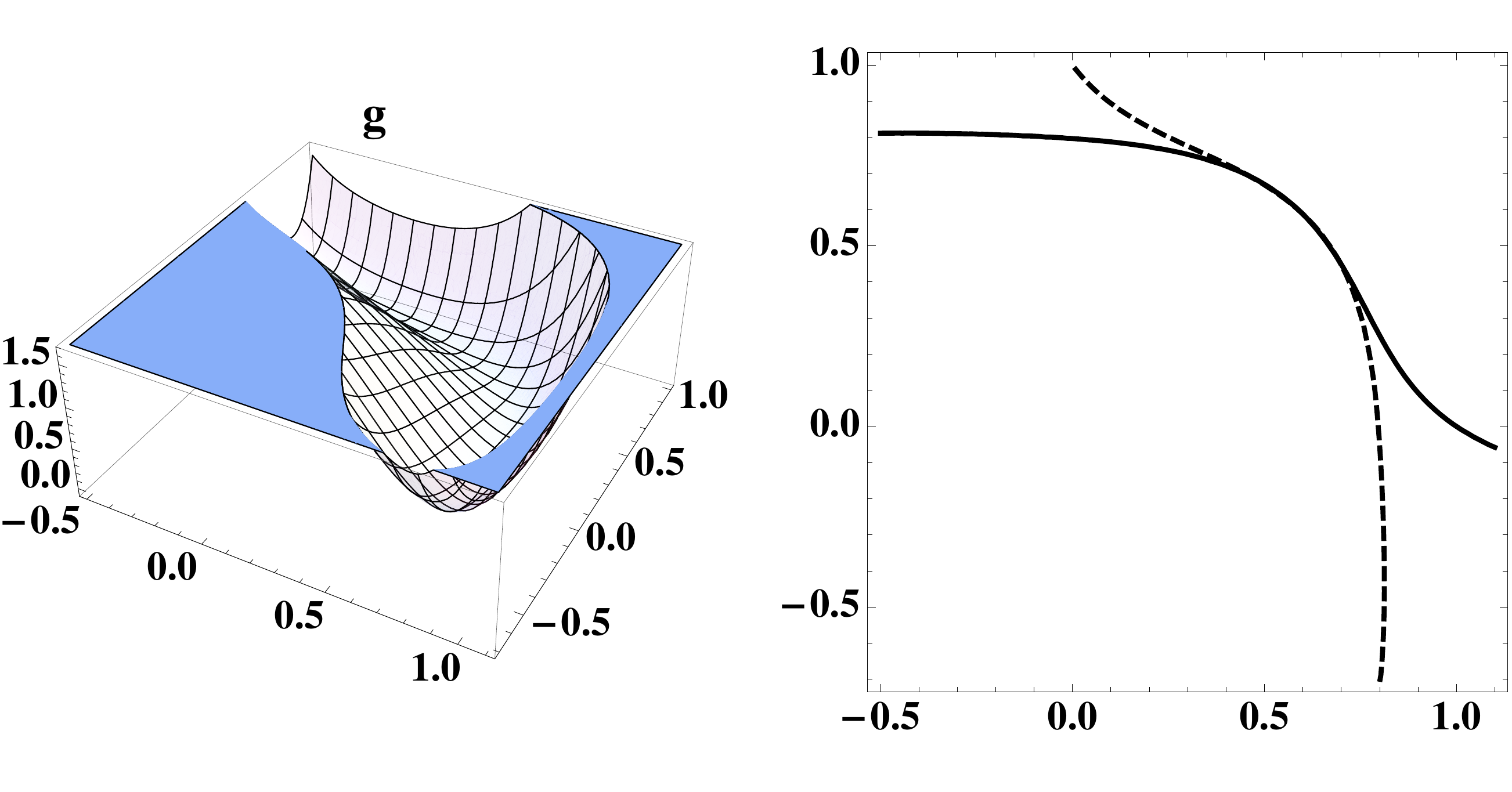}
\caption{\label{figrut1} The ``landscape"  of the function $g$ (left) and the zero level curves of the components of $f$ given in the Example \ref{ex1}.}
\end{center}
\end{figure}

  %===================================
    \begin{example}\label{ex1} Consider the system
    $$
    \left\{
    \begin{array}{l}
    x+y=1\\
    x^2+y^2=0.8\\
    x^3+y^3=0.68\\
     x^4+y^4=0.01\\ 
    \end{array}
    \right.
    $$
    The first three equations were considered by Rutishauser \cite{rutishauser}  for illustrating a least squares problem. A least squares solution for the system 
    can be found by  minimizing the function
    $$
    g(x,y)= s_1^2+s_2^2+s_3^3+s_4^2,
    $$
    where $s_1$ to $s_4$ are the residuals
    $s_1=x+y-1$, $s_2=x^2+y^2-0.8$, 
 $s_3=  x^3+y^3-0.68$, and $ s_4=x^4+y^4-0.01$.
 
 \medskip
\noindent
We apply some Newton-barycentric maps (see Table \ref{tab2}) in order to locate the roots of the equation $f(x,y)=\nabla g(x,y)=(0,0)$, where the components of the function $f=(f_1,f_2)$ are the following polynomials
$$
\begin{array}{l}
f_1(x,y)=2.-1.2 \,x-4.08\, x^2+3.92\, x^3+6 \,x^5+\\
\hspace{3cm} +8\, x^7+2\, y+4\, x \,y^2+6 x^2\, y^3+8\, x^3 \,y^4,\\
\\
f_2(x,y)=-2.+2 \,x-1.2\, y+4\, x^2 \,y-4.08\, y^2+6\, x^3 \,y^2+\\
\hspace{3cm} +3.92\, y^3+8 \,x^4 \,y^3+6 \,y^5+8\, y^7.
\end{array}
$$

\noindent
In Figure \ref{figrut1} we show  both  the plots of the of function $g$  ``landscape"  and  the zero level curves of $f_1$ and $f_2$. Since   $g$ has a flat  ``valley" and the zero level curves of $f$ seem to cross in several points it is not clear at all if a global minimum exists for this function in the following domain
\begin{equation}\label{Dex1}
D=[x_{min},x_{max}]\times [y_{min},y_{max}]=[-0.5,1.1]\times [-0.7,1.1].
\end{equation}
    \begin{figure}[h]
\begin{center} 
 \includegraphics[totalheight=8.0cm]{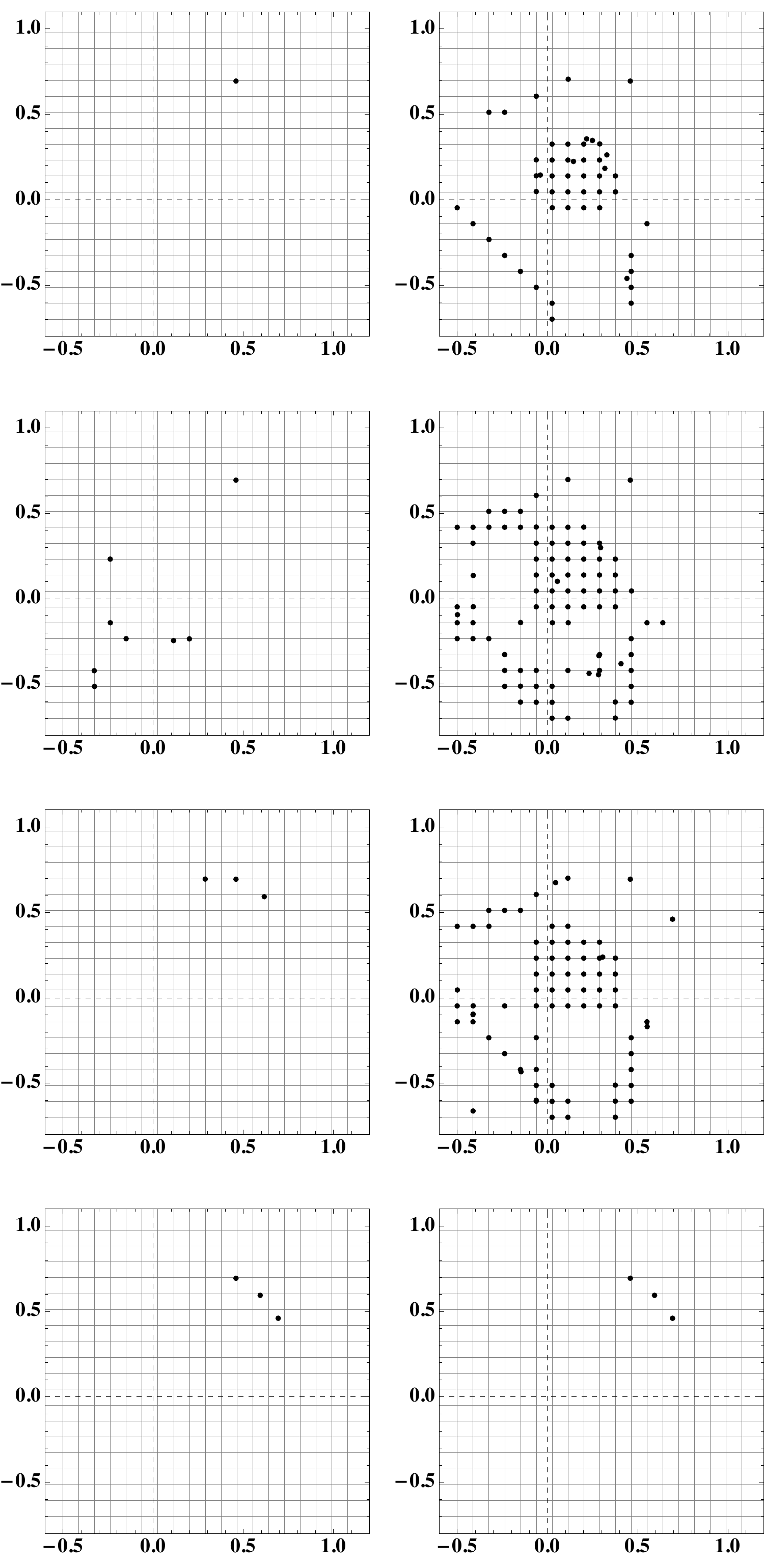}
\caption{\label{figrut2}  Captured points after two iterations of $t_0$, $t_2$, $t_4$, $t_{32}$ (left column) and $t_1$, $t_3$, $t_5$, $t_{43}$ (right column) \--- Example \ref{ex1}.}
\end{center}
\end{figure}

\noindent
We consider  a tolerance $\epsilon=0.001$ and  a rectangular grid having mesh widths respectively $d_x\simeq 0.0876712$ and $d_y\simeq 0.0931507$. 
 The respective list ${\cal L}$ contains the vertices of the mesh, that is  $N=19\times 19=361$ data points,  belonging to the search domain $D$. The previously described algorithm is applied  to the data points using respectively the Newton\--barycentric maps $t_0$ to $ t_5$ and $t_{21}, t_{32}$ (the map $t_{ij}$ is the composition $t_i(t_j)$).   The computations were carried out using the system {\sl Mathematica} \cite{wolfram1} in double precision.

\medskip
\noindent
The elements of the list of captured points (after two iterations of each map) are shown in Figure \ref{figrut2}.  The number of captured points is given in Table~\ref{tabg3}  and the captured points and their respective value by $g$ (rounded to $6$ decimal places) are in Table \ref{tabg4}.  From Figure~\ref{figrut2}, we see that the  $18$ captured points  cluster near 3 distinct points in the search domain  $D$. It is clear from Table \ref{tabg4} that two  global minimum have been located, one of them  at  $P=(0.459591,0.693716)$, with $g(P)=0.167974$.

\begin{table}
$$
\begin{array}{| c| c| c| c| c| c| c| c|c|}
\hline
t_i& t_0&t_1&t_2&t_3&t_4&t_{5}&t_{21}&t_{32}\\
\hline
\verb+#+& 1   &50   &  8 & 89 &4 & 77&6&18  \\
 \hline
  \end{array}
  $$
  \caption{Number of captured points in the domain $D$ given in \eqref{Dex1} \--- Example \ref{ex1}. \label{tabg3}}
  \end{table} 
 
 \medskip 
  
    \begin{figure}[h]
\begin{center} 
 \includegraphics[totalheight=6.7cm]{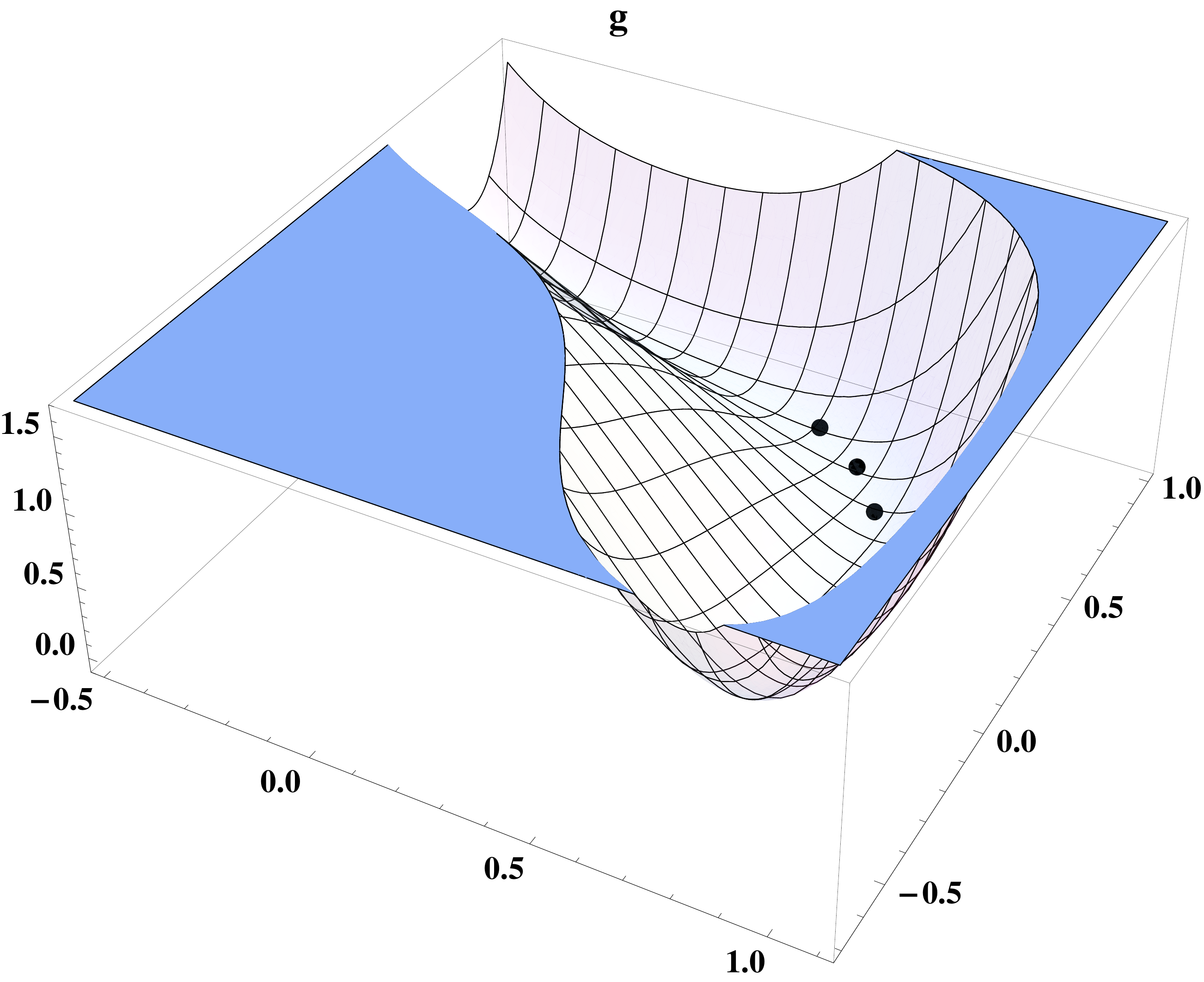}
\caption{\label{figrut3}  The three bold black points represent the clusters of captured points obtained after two iterations of $t_{32}$ \--- Example \ref{ex1}.}
\end{center}
\end{figure}

 \begin{table}
$$
\begin{array}{| c  | }
\hline
(x_i,y_i, g(x_i,y_i))\\
\hline
(0.459591,0.693716,0.167974)\\
\hline
(0.693716,0.459591,0.167974)\\
\hline
(0.593976,0.593976,0.169389)\\
\hline
   \end{array}
  $$
  \caption{Three of the 18 points   captured by $t_{32}$ (the other  coincide up to 6 decimal places) with tolerance $\epsilon=0.001$\---  Example \ref{ex1}.\label{tabg4}}
  \end{table} 
 
 \medskip
 \noindent
 \end{example}
      \begin{figure}[h]
\begin{center} 
 \includegraphics[totalheight=4.0cm]{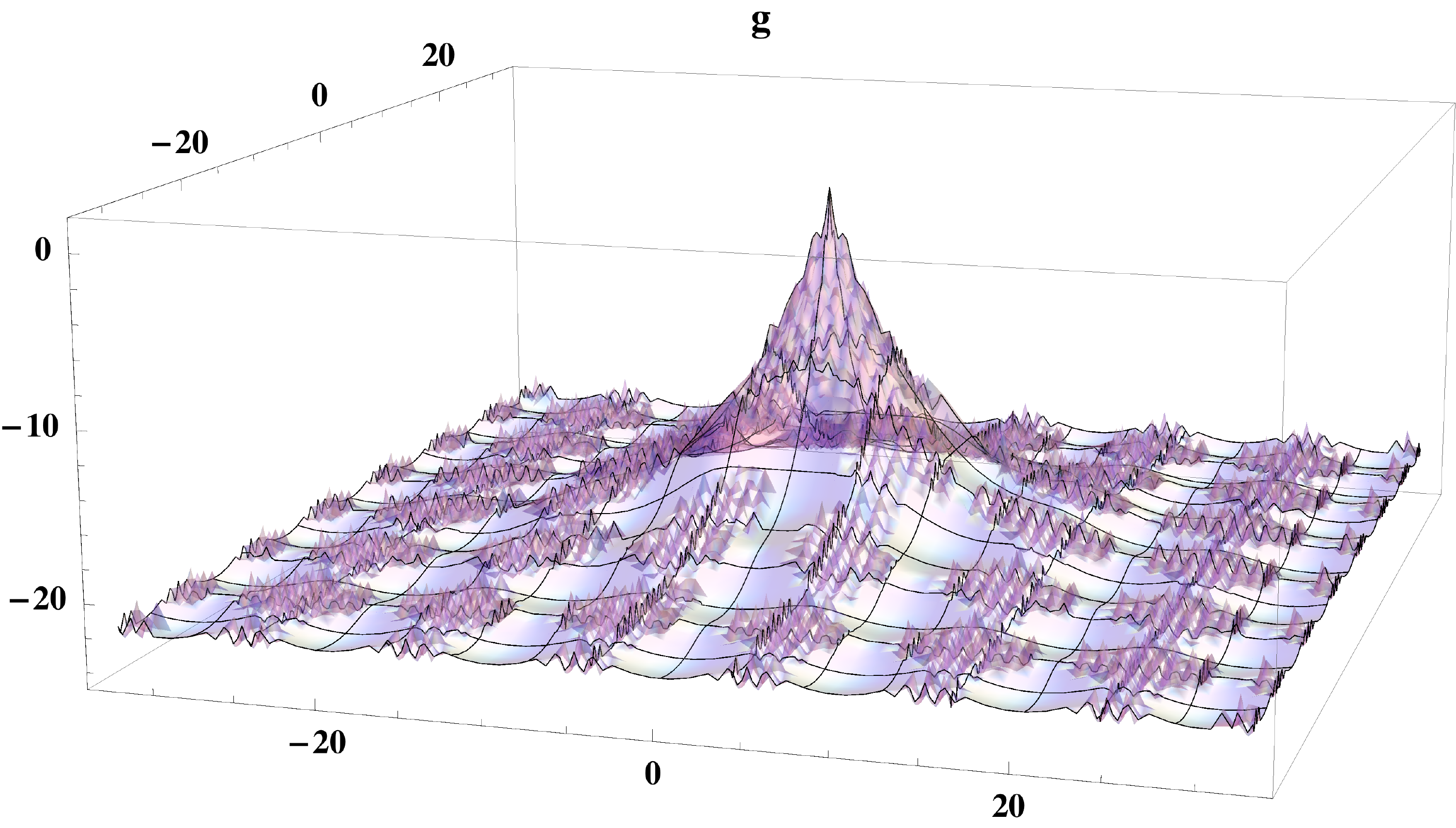}
\caption{\label{figg} A 3D plot of the function $g$ given in \eqref{g1}.}
\end{center}
\end{figure}

 %===================================================
  
 \noindent
 
     \begin{example}\label{ex2}
      In this example   two iterations of Newton-barycentric  maps  are used in order to  locate simultaneously a great number of zeros of a  function $f$, related to the famous Ackley's function \cite{ackley}.   
 We consider the following function 
     \begin{equation}\label{g1}
     \begin{array}{l}
     g(x,y)= - (-20 \,e^{s_1}- e^{s_2}+20+e), \quad \mbox{where}\\
     \\
     s_1=- 0.2 \sqrt{0.5\, (x^2+y^2)}\quad \mbox{and}\quad  s_2= 0.5\left( \cos(2\pi\, x)+ cos(2\pi\,y) \right).
     \end{array}
\end{equation}
The function $g$ is the symmetric of the  Ackley's function which is widely used  for testing optimization algorithms (see for instance \cite{more}).

     \medskip
     \noindent
 We consider the standard search domain $D=[x_{min}, x_{max}]\times [y_{min}, y_{max}]\subset \Rb^2$, with
     $$
    x_{min}=y_{min}= -32.768\quad \mbox{ and} \quad  x_{max}=y_{max}= 32.768.
     $$
     The ``landscape" of $g$ is shown in Figure \ref{figg}. 
     At $(0,0)$ the function $g$ has a global maximum equal to zero and a great number of local extrema in $D$. Some extrema  will be (simultaneously) located  by applying two iterations of  Newton-barycentric maps, namely  \eqref{order3}, \eqref{order4}. 
     For that purpose we consider  $f:D\subset\Rb^2\mapsto \Rb^2$ to be the gradient of $g$ and we look for its zeros.
            \begin{table}
$$
\begin{array}{| c| c| c| c|}
\hline
x_i&y_i&f(x_i,y_i)& g(x_i,y_i)\\
\hline
-1.65185& -1.65185& (1.33227*10^{-15}, 1.33227*10^{-15})& -7.7843\\
   \hline
    -1.65185&  1.65185& (1.33227*10^{-15}, -1.33227*10^{-15})& -7.7843 \\
    \hline
-1.65185&-1.65185&(1.33227*10^{-15}, 1.33227*10^{-15})&  -7.7843\\
    \hline
 -1.6103& 0.&(-1.33227*10^{-15}, 0.)& -5.66925\\
     \hline
   -1.65185&  1.65185& (1.33227*10^{-15}, -1.33227*10^{-15})&-7.7843\\    
       \hline
   0.& -1.6103& (0., -1.33227*10^{-15})& -5.66925\\
       \hline
 0.& 1.6103&(0., 1.33227*10^-{15})& -5.66925\\
     \hline
    1.65185& -1.65185& (-1.33227*10^{-15}, 1.33227*10^-{15})& -7.7843\\
        \hline
    1.6103&  0.&(1.33227*10^{-15}, 0.) & -5.66925\\
        \hline
   1.65185&  1.65185& (-1.33227*10^{-15}, -1.33227*10^{-15})& -7.7843\\
       \hline
   1.65185&-1.65185& (-1.33227*10^{-15}, 1.33227*10^{-15})& -7.7843\\
       \hline
 1.65185& 1.65185&(-1.33227*10^{-15}, -1.33227*10^{-15})& -7.7843\\
     \hline
   \end{array}
  $$
  \caption{Tolerance $\epsilon=0.1$ and a square mesh of width $d\simeq 1.6384$. Captured points by $t_{54}$ which are  located at a distance not greater than 3 from the origin \label{tabg5}  \--- Example \ref{ex2}.}
  \end{table} 
 
 \medskip
 \noindent
  The components of $f=\nabla g= (f_1,f_2)$ are
%\begin{equation}\label{g2}
$$\begin{array}{ll}
f_1(x,y)=&-\displaystyle{     \frac{2.8284271247461907\times e^{-0.14142135623730953\, \sqrt{x^2 + y^2}} \,x}{\sqrt{x^2 + y^2}}} - \\
& \hspace{0.3cm} - 3.141592653589793\times  e^{0.5\, (\cos(2 \,\pi\, x) +\cos(2\, \pi\, y))}\,
   \sin(2 \pi\,x)=0\\
   \\
   f_2(x,y)&=-\displaystyle{     \frac{2.8284271247461907\times e^{-0.14142135623730953\, \sqrt{x^2 + y^2}} \,y}{\sqrt{x^2 + y^2}}} - \\
& \hspace{0.3cm} - 3.141592653589793\times  e^{0.5\, (\cos(2 \,\pi\, x) +\cos(2\, \pi\, y))}\,
   \sin(2 \pi\,y)=0.
   \end{array}$$
%\end{equation}
The function $f$ is not defined at $x=y=0$, but can be continuously extended to the origin by taking $f(0,0)=(0,0)$.  Moreover,  the function is not differentiable at $(0,0)$ which explains  why  global search algorithms   can hardly find the global extremum of $f$  located at the origin.

\begin{figure}[h]
\begin{center} 
 \includegraphics[totalheight=9.2cm]{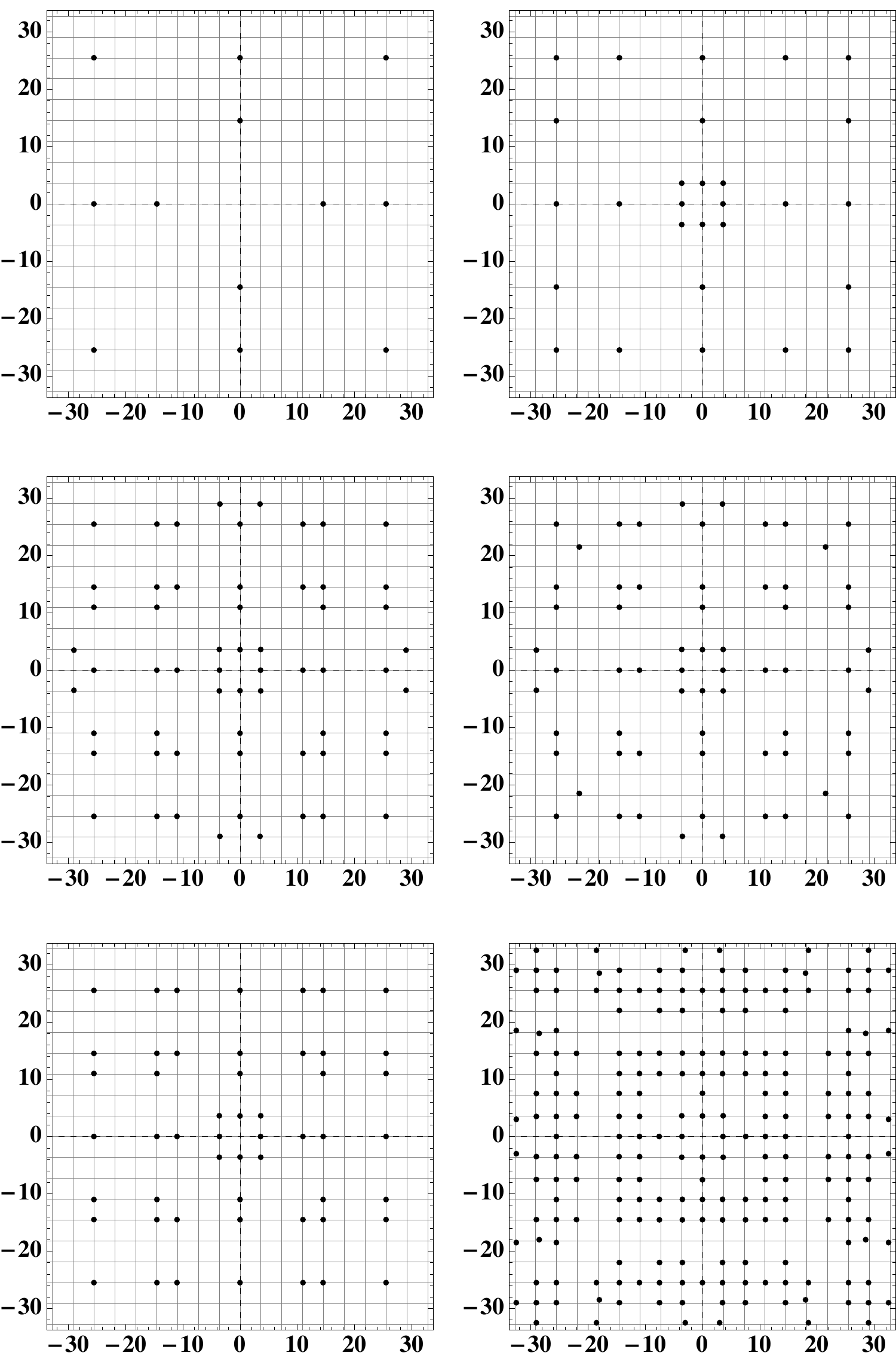}
\caption{\label{figg2} Tolerance $\epsilon=0.001$. Captured points after two iterations for $t_0$, $t_2$ and $t_4$ (left column), and $t_1$, $t_3$ and $t_{54}$ (right column) \--- Example \ref{ex2}.}
\end{center}
\end{figure}

   \begin{table}
$$
\begin{array}{| c| c| c| c| c| c| c|}
\hline
t_i& t_0&t_1&t_2&t_3&t_4&t_{54}\\
\hline
\verb+#+& 12  &28   &  60 & 64 &52 & 208   \\
 \hline
  \end{array}
  $$
  \caption{Number of captured points \--- Example \ref{ex2}.\label{tabg2}}
  \end{table} 

 \begin{figure}[h]
\begin{center} 
 \includegraphics[totalheight=4.3cm]{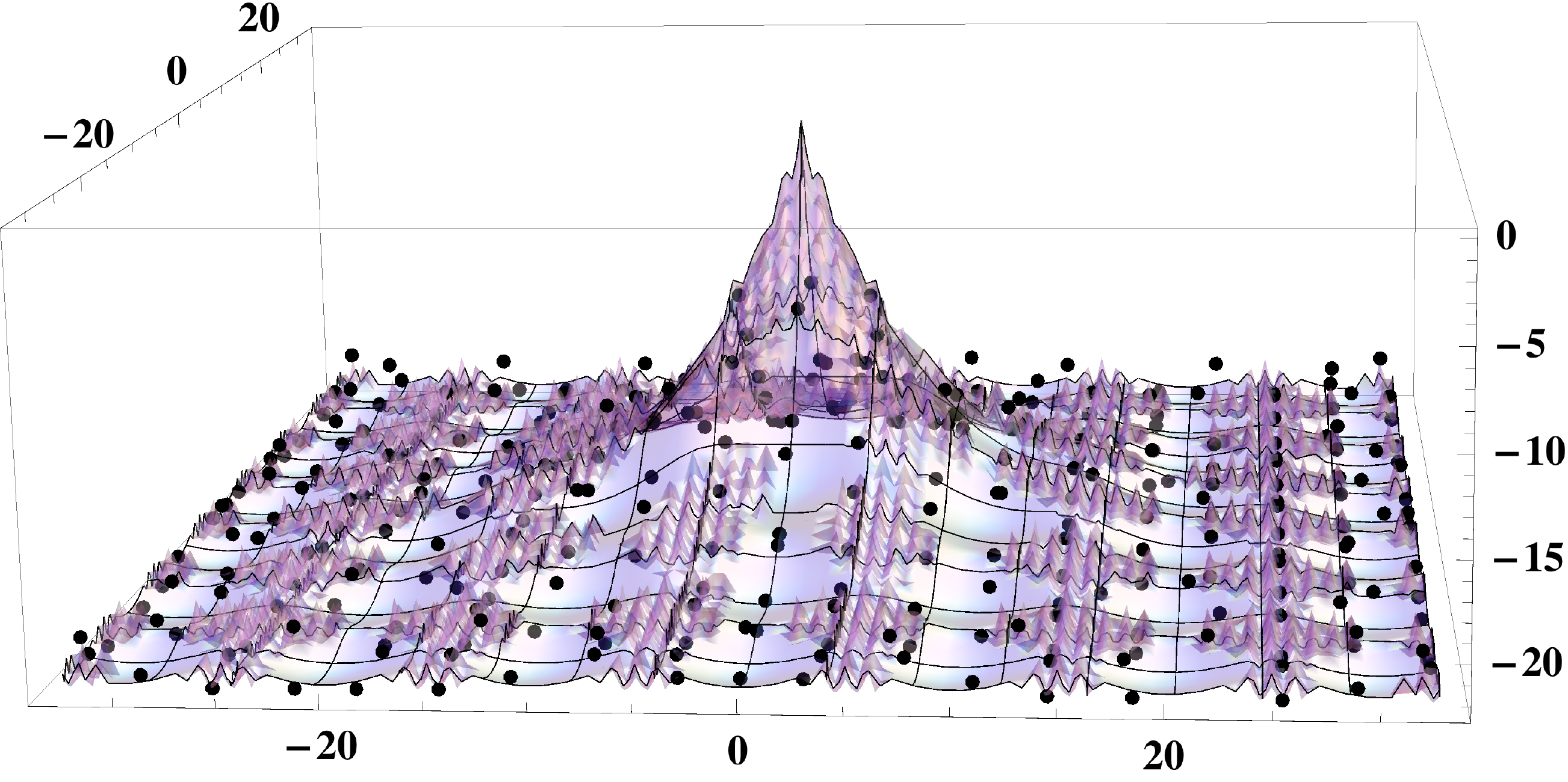}
\caption{\label{figg5} Tolerance $\epsilon=0.1$ and $d\simeq 1.6384$. The map  $t_{54}$ captures $1458$ points  after two iterations  \-- Example \ref{ex2}.}
\end{center}
\end{figure}
 
 \medskip
\noindent
For a tolerance $\epsilon=0.001$ we  consider the set of $N=361$ points in  $D$ formed by  the vertices of a uniform  large mesh of width $d=d_x=d_y\simeq 3.64089$. We apply two iterations of the Newton-barycentric maps $t_0$, $t_1$, $t_2$, $t_3$ and $t_{54}$ (the map $t_{54}$ is the composition $t_5\circ t_4$) to the data points following the algorithm previously described. 

\medskip
\noindent
The captured points are shown in Figure \ref{figg2} and the number of captured points  is given in Table \ref{tabg2}.   The map $t_{54}$  is used just for comparison purposes with   other Newton-barycentric maps of lower order.  From Figure \ref{figg2} it is reasonable to conclude that the zeros of $f$  are symmetrically located  with respect to the axes.  We remark that the Newton's map $t_0$ only captures $9$ points after two iterations and in this sense the other higher order methods might give more insight  on the location of the zeros of the function $f$ (or the extrema of Ackley's function).

\medskip
\noindent
For the tolerance $\epsilon=0.1$ and $N=1681$ data points obtained from a mesh of width $d\simeq 1.6384$, the map $t_{54}$ is able to capture $664$ points  $(x_i,y_i)$ clustering near the   zeros of the function $f$. In Figure \ref{figg5}  we present the 3D  plot of the captured points $(x_i,y_i,g(x_i,y_i))$ for  the function $g$ given in \eqref{g1}.  In order to observe how close are the captured points by $t_{54}$ to the zeros of $f$, we give in
 Table~\ref{tabg5}  the coordinates of the points which are at a distance from $(0,0)$ not greater than $3$ as well as the respective values for $f$ and $g$.
    \end{example}
  %====================================

\end{document}